\documentclass[12pt]{article}
\usepackage{amsmath,amsfonts,amssymb,amsthm}
\usepackage{graphicx}
\usepackage{hyperref}
\usepackage{geometry}
\usepackage{tikz}
\usetikzlibrary{decorations.pathmorphing}
\usetikzlibrary{calc}
\geometry{a4paper, margin=1in}
\usetikzlibrary{arrows.meta}
\newtheorem{theorem}{Theorem}[section]
\newtheorem{proposition}[theorem]{Proposition}
\newtheorem{lemma}[theorem]{Lemma}
\newtheorem{corollary}[theorem]{Corollary}
\newtheorem{definition}[theorem]{Definition}
\newtheorem{remark}[theorem]{Remark}
\newtheorem{example}[theorem]{Example}
\newcommand{\diff}{\text{Diff}}
\newcommand{\homeo}{\text{Homeo}}
\newcommand{\flux}{\mathcal{F}}
\newcommand{\ahom}{\mathfrak{AHom}}
\newcommand{\rdelta}{\mathbf{R}_\delta}

\newcommand{\iso}{\text{Iso}}
\newcommand{\homo}{\text{Hom}}

\newcommand{\zcal}{\mathcal{Z}}
\newcommand{\ocal}{\mathcal{O}}
\newcommand{\pcal}{\mathcal{P}}

\begin{document}
		\begin{center}
		{\bf\large Extending the flux homomorphism to volume-preserving homeomorphisms}
		\\[0.5cm]
		{ St\'{e}phane Tchuiaga $^a$ \footnote{$^*$Corresponding Author} \\[2mm]
			$^a$Department of Mathematics of the University of Buea, 
			South West Region, Cameroon\\[2mm]
			{\tt E-mail: tchuiagas@gmail.com}} \\[2mm]
	\end{center}%
	\vspace*{0.5cm}
	\begin{abstract}
		This paper extends the flux homomorphism to volume-preserving homeomorphisms. 
		A surprising  $(C^0, \delta)-$rigidity result where the extended flux groups coincide with the standard flux group is proved. The introduced tools, which also include a Poincaré duality with Fathi’s mass flow and a norm on the group of volume-preserving homeomorphisms, indicate a potential for  new flexibility in the behavior of homeomorphisms. This flexibility could have implications for rigidity results in symplectic/cosymplectic geometry, particularly those concerning Lefschetz manifolds:  Any finite energy symplectic homeomorphism of $(T^2, \omega)$ with trivial flux, is a finite energy Hamiltonian homeomorphism of $(T^2, \omega)$. We discuss the cohomology groups $H^\ast(Homeo_0(M,\Omega), \mathcal{C}(M, \mathbb{R}) )$ of $ Homeo_0(M,\Omega)$ with coefficients in $  \mathcal{C}(M, \mathbb{R})$. 
	\end{abstract}
		\ \\
	{\bf Keywords:}  Volume-preserving homeomorphisms, Flux homomorphism, Rigidity, Cohomology groups, $C^0-$topology.\\
	
	\textbf{2000 Mathematics subject classification: } 53Dxx, 57S05, 37J05, 55N99.\\

\section{Introduction}
The study of dynamical systems and {\it control} often involves understanding the long-term behavior of systems, especially in the presence of constraints, conservation laws, and perturbations. {\it In control theory, guaranteeing stability and achieving desired performance under various uncertainties is a fundamental objective.} A central tool for analyzing such systems is the concept of a flow, a continuous transformation that describes the system's evolution over time. Within this domain, transformations that preserve geometric structures like volume or symplectic forms play a key role, as they reflect  conservation laws in many other systems. The flux homomorphism is a well-established invariant used to study such transformations, particularly within the realm of diffeomorphisms. It measures the "quantity" of the flow generated by an isotopy, and it plays a crucial role in understanding the structure of diffeomorphism groups.\\ 

However, in many practical applications, we encounter systems where the regularity is limited, or where the transformations are continuous (homeomorphisms) but not necessarily differentiable.\\ {\it This is common in systems with friction, hysteresis, or switching, where the control inputs may not be smooth.}\\ This paper addresses a key challenge: extending the well-established notion of the flux homomorphism, traditionally defined for smooth diffeomorphisms, to the class of volume-preserving homeomorphisms. This has important ramifications because there are many systems that do not satisfy differentiability conditions, but the topological properties of the flow are well-defined.

\begin{theorem}\label{Theo-4} \cite{Mull-1, J-C}
	Let $M$ be a closed connected smooth manifold of dimension $n$. If $n\leq 3$, then any homeomorphism can be uniformly approximated by diffeomorphisms. If $n\geq 5$, then a homeomorphism $h$ of $M$ can be uniformly approximated by a diffeomorphism $\phi$ if and only if $h$ is isotopic to a diffeomorphism.
\end{theorem}

From a control perspective, this theorem implies that many complex systems can be approximated by simpler, smooth models, which is a cornerstone for performing predictions. However, some caution must be observed, and in the present paper we focus on aspects that are valid when we remove all smoothness on the system.  The above theorem holds if we replace homeomorphism (resp. diffeomorphism) with volume-preserving homeomorphism (resp. volume-preserving diffeomorphism) \cite{J-C}, \cite{Mull-1}. The isotopies concerned in Theorem \ref{Theo-4} are continuous maps 
$\Phi : [0, 1] \rightarrow Homeo(M), t\mapsto \phi_t$. If the homeomorphism $h$ is volume-preserving, then it can be uniformly approximated by a volume-preserving diffeomorphism \cite{J-C}, \cite{Mull-1}. So, it is known that on a closed connected oriented manifold, any continuous path $\Psi := \{\psi^t\}$ in $Homeo(M)$ with $\psi^0 = id_M$ can be uniformly approximated by a continuous
isotopy $\Theta : = \{\theta_t\}$ in $Diff^\infty(M)$. In terms of invariant, this raises the following  question.

\subsection*{Question:} Is there a general invariant for volume-preserving homeomorphisms in high dimension which coincides with the usual flux homomorphism when restricted to the group of volume-preserving diffeomorphisms in an optimal way (by optimal, mean respecting a general setting of the Fathi-Poincar\'e duality result)?\\

One goal of the present paper is to answer the above question. The contribution of this work is two-fold. Firstly, we introduce a method for extending the flux homomorphism to volume-preserving homeomorphisms via obstructions (see equation (\ref{Functor3})). Secondly, we prove a rigidity result (Theorem \ref{Theo}) demonstrating that all the topological flux groups associated with the extended flux coincides with the standard flux group. We also introduce a norm on the group of volume-preserving homeomorphisms (see equation \ref{Fixpoints-metric1}) which measures how much a transformation distorts differential forms, which are closely related to concepts of flow, circulation, and geometry.\\
 From a control perspective, the smaller the norm, the smaller a perturbation to the trajectory of a system will be after the system returns to its initial position. This makes norms of this type relevant in areas ranging from fluid dynamics (turbulent flows, mixing, or vorticity), in mechanics (systems with conserved quantities, or the chaoticity of dynamical systems), and theoretical physics.  In control theory, systems are often governed by conserved quantities, like the volume of the phase space, and understanding how to change the dynamics without changing the conserved quantities will be very useful. The tools developed also include a Poincaré duality with Fathi’s mass flow. These tools indicate a potential for  new flexibility in the behavior of homeomorphisms, \textbf{which can provide more degrees of freedom for controllers to be designed,} which could have  implications for understanding both the stability and control of dynamical systems.\\

This paper is organized as follows: In Section 2, we construct the extended flux invariant using gap functions. In section 3, we study volume preserving homeomorphisms, and derive the core results. Section 4 discusses a metric on the space of volume preserving homeomorphisms and section 5 concludes the paper.

	\section{Prelimimaries}
	In this section, we introduce the key concept of "gap functions," denoted by $\mathbf{R}_\delta(H, \alpha)$. These functions are fundamental to our approach, as they allow us to define an extended flux homomorphism for volume-preserving homeomorphisms, which are transformations that are continuous but not necessarily differentiable. The challenge is that the standard flux homomorphism relies on the differentiability of the map, but we want a definition that also applies in the $C^0$ context. To achieve this, we utilize the fact that homeomorphisms can be approximated by diffeomorphisms (smooth transformations), which is known if the dimension is less than or equal to 3, or when the homeomorphisms are isotopic to diffeomorphisms (Theorem 1.1).  The gap functions quantify the differences arising from these approximations, and, as we shall see, they will allow us to extend the notion of a flux to the $C^0$ context.
\subsection{The gap functions:  $ \mathbf{R}_\delta(H, \alpha)$}
Let $r(g)$ denote the injectivity radius of the Riemannian metric $g$ on $M$. For each $\delta \in ]0,r(g)[$, we may choose a path $\Theta$ such that $d_{C^0}(\Theta, \Psi) < \delta$. Here, $\Psi$ represents the original path of homeomorphisms that we want to analyze, and $\Theta$ represents a smooth approximation of this path. In other words, we can approximate a path of homeomorphisms by a path of diffeomorphisms, up to an arbitrarily small $C^0$ error. If $\psi^1$ is a volume-preserving diffeomorphism, then we can assume that $\theta_1 = \psi_1$.  On the other hand, the continuous path $\Theta : = \{\theta_t\}$ is homotopic relatively to fixed endpoints to a smooth path $\Phi = \{\phi^t\}$ in $\diff^\infty_0(M)$ \cite{TKS25}. This homotopy, $\Sigma^s_t$, connects the approximation $\Theta$ with a smooth isotopy $\Phi$ such that $\Sigma^1_t = \phi^t$, and $\Sigma^0_t = \theta^t$.  The condition of being "homotopic relatively to fixed endpoints" is essential because it ensures that the approximation respects the initial and final configurations of the transformation. This will be a key idea in the definition of the extended flux. Let $\mathfrak{AHom}_\delta(\Psi)$ denote the set of all smooth isotopies homotopic relatively to fixed endpoint to an approximate $\Theta$ of $\Psi$ such that $d_{C^0}(\Theta, \Psi)< \delta$. We shall denote the set $\mathfrak{AHom}_\delta(\Psi)$ by $\mathfrak{AHom}_\delta^\Omega(\Psi)$ in the case of volume-preserving \cite{Ban97}.
Fix $\delta$ in  $]0,r(g)[$, and for each closed $1-$form $\alpha$ on $M$, we define the gap function as follows:     	   

\begin{equation}\label{Functor3}
	\rdelta(H, \alpha)(p)  : =
	\left\{
	\begin{array}{lccc}
		( \flux_\alpha(H)(1))(p) &  \text{ if }  &  H\in \iso(M)\\
		( \flux_\alpha(\Phi)(1))(p) +  \int_{\chi_\delta^p}\alpha, & \text{ if } & H \in (\pcal(\homeo(M),\text{Id})\setminus \iso(M)),
	\end{array}
	\right.  
\end{equation}
where  $ \flux_\alpha
(\Phi)(t)  : = \int_0^{t}\left( (\Phi_s)^\ast(\iota_{\dot\Phi_s}\alpha)\right) ds$, $\Phi$ is any element in $\ahom_\delta(H)$, and $\chi_\delta^p$ is a minimizing geodesic from $H(1,p)$ to $\Phi(1,p)$. This geodesic will be needed to relate the orbits of $H$ and $\Phi$. The map $ \rdelta$ is well defined and does not 
depend on the choice of $\Phi$  in $\ahom_\delta(H)$, and the Hopf-Rinow theorem guarantees the existence and uniqueness of the minimizing geodesic from  $\Phi(1,p)$ to $H(1,p)$. For each $p\in M$, the orbit $\ocal_p^H$ can be written as a collection $\bigsqcup_{i\in I}C_i(p)$ of smooth curves $C_i(p)$, and for all closed $1-$form $\alpha$, the integral  $\int_{\ocal_p^H}\alpha$ is interpreted as $ \sum_{i\in I}\int_{C_i(p)}\alpha$. It follows that for each  $\Phi\in \ahom_\delta(H)$, we have 
\begin{equation}\label{Functor33}
	\int_{\ocal_p^H}\alpha = \int_{\ocal_p^\Phi}\alpha + \int_{\chi_\delta^p}\alpha =  ( \flux_\alpha(\Phi)(1))(p) +  \int_{\chi_\delta^p}\alpha.
\end{equation}  
Also, for each smooth curve $\gamma$, since the curves $s\mapsto H(1,\gamma (s) )$, $s\mapsto \Phi(1,\gamma (s) )$, and geodesics $\chi_\delta^{\gamma(0)}$, $\chi_\delta^{\gamma(1)}$ form the boundary of a $2-$chain, we equally have  
\begin{equation}\label{Functor44}
	\int_{ H(1,.)\circ \gamma}\alpha  - 	\int_{\Phi(1,.)\circ \gamma}\alpha= 
	\int_{\chi_\delta^{\gamma(0)}}\alpha- \int_{\chi_\delta^{\gamma(1)}}\alpha.
\end{equation}
\begin{figure}[ht] 
	\centering
	\begin{tikzpicture}[scale=1.0]
		\draw[thick, blue] (0,0) to[out=30, in=150] (5,0);
		\node[above] at (0,0) {Id};
		\node[above] at (5,0) {$h$};
		\node[above, blue] at (2.5, 0.5) {$\Phi_t$};
		
		\draw[thick, red, decorate, decoration={snake,amplitude=0.1cm,segment length=0.2cm}] (0,-1) to[out=30, in=150] (5,-1);
		\node[below] at (0,-1) {Id};
		\node[below] at (5,-1) {$h$};
		\node[below, red] at (3,-0.5) {$H_t$};
		\foreach \x in {0.5,1.5,...,4.5}
		{
			\draw[dashed, gray] (\x,0) to[out=-60, in=60] (\x,-1);
		}
		\draw[dashed, gray] (0,0) to[out=-60, in=60] (0,-1);
		\draw[dashed, gray] (5,0) to[out=-60, in=60] (5,-1);
		
		\draw[->, dashed, gray, thick] (2,0) to[out=-60, in=60] (2,-1);
		\node [right, align=left] at (2,-0.5) {$\chi_\delta^p$};
		
		\begin{scope}[shift={(2,-1.7)},scale=0.5]
			\draw[red, thick] (0,0) -- ++(0.1, 0) -- ++(0, 0.4) -- ++(-0.1,0);
			\draw[blue, thick] (0,0) -- ++(0.1, 0) -- ++(0, 0.25) -- ++(-0.1,0);
			\draw[<->] (0,0.25) to (0,0.4);
			\node [right, align=left] at (0.1,0.3) {$\int_{\chi_\delta^p}\alpha$};
			
		\end{scope}
		
	\end{tikzpicture}
	\caption{This image shows a smooth path $\Phi_t$ (blue) and a non-smooth path $H_t$ (red) connecting two points.
		Several geodesics $\chi_\delta^p$ connect points on both paths, illustrating how the gap is measured.
		A close up shows the correction term.}
	\label{fig:gap_function} 
\end{figure}
	\subsection{Properties of the functions  $ \mathbf{R}_\delta(H, \alpha)$}
	
	\begin{proposition}\label{pro2}  Let $\alpha$ be a  closed $1-$form, $H \in \pcal\homeo_0(M)$ and  $\delta \in ]0,r(g)[$. Then 
		for all $x, y\in M$, and
		for each $\gamma\in \mathcal C^1(x\longrightarrow y)$, 
		the quantity
		$I_\alpha(h,x, y)$ coincides 
		with  $\rdelta(H,\alpha)(y) - 
		\rdelta(H,\alpha)(x)$.
	\end{proposition}
	\begin{proof} Pick any $\Phi :=(\phi_t)$ in  $\ahom_\delta(H)$, and $\gamma\in \mathcal C^1(x\longrightarrow y)$. Consider the smooth $2-$chain $ \bigcirc(\Phi, \gamma) := \{\phi_t(\gamma(s)): 0\leqslant s, t\leqslant 1\}$.  Applying Stokes theorem  gives 
		$\int_{\phi_1(\gamma)}\alpha - \int_{\gamma}\alpha = \int_{\ocal_{x}^\Phi}\alpha -\int_{\ocal_{y}^\Phi}\alpha.$
		The above equality together with (\ref{Functor44}) yields the desired result. 
	\end{proof}
	
	\begin{corollary}\label{cor00}   Let $\alpha$ be a  closed $1-$form, $H \in \pcal\homeo_0(M)$ with time-one map $h$ and  $\delta \in ]0,r(g)[$. Then 
		$ \int_{h\circ\gamma} \alpha   - \int_\gamma\alpha = \rdelta(H,\alpha)(\gamma(1)) - 
		\rdelta(H,\alpha)(\gamma(0)),$
		for all curve $\gamma$ in $M$. 
	\end{corollary}
	
	\begin{corollary}\label{cor0}   Let $\alpha$ be a  closed $1-$form, and $H , N\in \pcal \homeo_0(M)$ . The following hold. 
		\begin{itemize}
			\item $ \rdelta(H\circ N,\alpha) = \rdelta( N,\alpha) + 
			\rdelta(H,\alpha)\circ N$.
			\item $\rdelta(H^{-1},\alpha) = - \rdelta(H,\alpha)\circ H^{-1}$.
		\end{itemize} 
	\end{corollary}
	\begin{proof}
		This follows from  formula (\ref{Functor33}).
	\end{proof}
	\begin{lemma}\label{lem3} Fix $\delta \in ]0,r(g)[$.
		Assume that $H, H'\in \pcal\homeo_0(M)$ are 
		homotopic relatively to fix endpoints, and let $z\in M$ such that 
		$\ocal_z^H$ and 
		$\ocal_z^{H'}$ is piece-wise smooth. Then,  
		$\rdelta(H,\alpha) =  \rdelta(H',\alpha),$
		for each closed  $1-$form $\alpha$. 
	\end{lemma}
	\section*{Interpretation of \( I_\alpha(h, x, y) \)}  
	Let $H = (h_t)$ be a continuous path in $\text{Homeo}(M)$ such that $h_1$ is its time-one map, pick $x, y \in M$, 
	 consider curves $\gamma \in C^1(x \to y)$, and study for each closed $1-$forms  $\alpha$ consider the function
	\[
	I_\alpha(h, x, y) := \int_{h_1 \circ \gamma} \alpha - \int_{\gamma} \alpha.
	\]
	The integral \( \int_\gamma \alpha \) measures the ``work'' done by the 1-form \( \alpha \) along the curve \( \gamma \), while the integral \( \int_{h_1 \circ \gamma} \alpha \) measures the ``work'' done by \( \alpha \) along the deformed curve \( h_1 \circ \gamma \).  The difference \( I_\alpha(h, x, y) \) quantifies how much the deformation \( h_1 \) changes the action of \( \alpha \) along \( \gamma \). \\
	In Fathi's construction for the $2-$disk \cite{Fathi}, the angular variation measures how much the direction between pairs of points changes under a deformation.  
	Here, \( I_\alpha(h, x, y) \) generalizes this idea by replacing angles with the action of curves on closed 1-forms, which is a more natural invariant for manifolds. \\   

	 The functional \( I_\alpha(h, x, y) \) provides a way to quantify the global effect of a deformation \( h_1 \) on the manifold \( M \). It captures how much the deformation ``twists'' or ``shears'' the manifold, as measured by the action of curves on closed $1-$forms.   In the study of dynamical systems, \( I_\alpha(h, x, y) \) can be used to analyze the behavior of flows or diffeomorphisms on \( M \). For example, it can detect the presence of fixed points, periodic orbits, or other dynamical features.  Thus,  
	 \( I_\alpha(h, x, y) \) generalizes Fathi's angular variation invariant to the context of manifolds with closed $1-$forms in measuring the gap between the actions of curves \( \gamma \) and \( h_1 \circ \gamma \) on closed $1-$forms. This provides a  tool for quantifying the global effects of deformations.

Normalizing the volume form $\Omega$ such that $\int_M \Omega = 1$ and fixing a closed 1-form $\alpha$, then to each $h$ in the identity
component of $\text{Homeo}(M, \Omega)$, one can assign a continuous real valued function $\chi(h, \alpha)$ defined
on $M$ by
\[
\chi(h, \alpha)(z) := \frac{1}{\|\alpha \|_{L^2}}\int_M I_\alpha(h, z, \cdot) \Omega, \quad \text{for each } z \in M. \tag{1}
\]
where $I_\alpha(h, z, \cdot) : y \mapsto I_\alpha(h, z, y) \in \mathbb{R}$.
Then, for each non-trivial closed $1-$form $\alpha$,
we have the following well-defined mapping :
$\chi(\cdot,\alpha) : \text{Homeo}_0(M)\longrightarrow C^0(M,\mathbb{R}),$ such that
for each $h\in \text{Homeo}_0(M)$, we have
$$
\chi(h,\alpha)(z) : = \frac{1}{\|\alpha \|_{L^2}}\int_M I_\alpha(h,z, \cdot)\, \Omega.
$$
	\begin{corollary}\label{lem1}  Let $\alpha$ be a  closed $1-$form, $H \in \pcal\homeo_0(M)$ and  $\delta \in ]0,r(g)[$. If $h$ is the time-one map of $H$, then  for all $z\in M$, we have 
		$$ \|\alpha \|_{L^2}\displaystyle\chi(h,\alpha)(z) 
		= -\left(\rdelta (H,\alpha)(z) - \int_M \rdelta(H,\alpha)\, \Omega\right),$$ and the latter formula does not depend on the choice of the path $H$ in $\pcal\homeo_0(M)$ from the identity to $h$. 
	\end{corollary}
	\begin{proof} This follows from   Corollary \ref{cor00}. 
	\end{proof}
	\begin{corollary}\label{cor1}  Let $\alpha$ be a  closed $1-$form, $H \in \pcal\homeo_0(M)$ and  $\delta \in ]0,r(g)[$. Then the  integral
		$\displaystyle \int_M \rdelta(H,\alpha)\Omega$ is independent of the choice of any representative $\beta$
		in the de Rham cohomology class  $ [\alpha]$.
	\end{corollary}
	\begin{proof} Let $\beta\in [\alpha],$ i.e.,  
		$\alpha - \beta = df$ for some smooth function $f : M\longrightarrow\mathbb{R}$. Pick 
		$\Phi = (\phi_t)\in \ahom_\delta(H)$. Derive from \cite{HLGF} that  
		$\int_M\flux_\alpha(\Phi)(1)\Omega = \int_M\flux_\beta(\Phi)(1)\Omega$.  Similar argument gives $\int_M (\int_{\chi_\delta}\beta)\Omega = \int_M(\int_{\chi_\delta}\alpha )\Omega $. 
	\end{proof}
	\begin{corollary}\label{cor2} Let $\alpha$ be a non-trivial closed $1-$form. If $h_1,h_2\in \homeo_0(M)$, then \\
		$ \chi(h_1\circ h_2,\alpha) = \chi(h_2,\alpha) + \chi(h_1,\alpha)\circ h_2.$
	\end{corollary}
	
Note that $ \chi(h,\alpha)(z)$ is  capturing the cohomological effect of $h$ 
with respect to the closed $1-$form $\alpha$ 
at the point $z$.
\section{Volume-preserving homeomorphisms}
In this section, we delve into the properties of volume-preserving homeomorphisms and how the extended flux invariant defined in Section 2, denoted by $\widetilde{L}_\Omega^\delta$, relates to the usual flux homomorphism, when restricted to diffeomorphisms. This section will be dedicated to proving our main result. Recall that volume-preserving transformations are essential in the study of dynamical systems because they represent systems where the phase space volume is constant, a property that characterizes, for instance, Hamiltonian systems. Thus, the extended flux becomes a powerful tool for their analysis. Corollary \ref{cor0} and Corollary \ref{cor1} suggest that: Fix $\delta \in ]0,r(g)[$, 
each isotopy   
$H\in \pcal\homeo_0(M,\Omega)$ induces an element $
T_H^\delta\in \homo(H^{1}(M,\mathbb{R}),\mathbb{R}),$ defined as 
$T_H^\delta([\alpha]) : =  \displaystyle \int_M \rdelta(H,\alpha)\Omega$. 
By the Poincar\'e duality theorem, we have a group homomorphism \\
$\widetilde{L}_\Omega^\delta : \pcal\homeo_0(M, \Omega) \longrightarrow H^{(n-1)}(M,\mathbb{R})$ such that 
$ T_H^\delta([\alpha]) = \langle  [\alpha], \widetilde{L}_\Omega^\delta(H)\rangle $, where\\ 
$ \langle\cdot ,\cdot\rangle :  H^1(M,\mathbb{R})\times H^{(n-1)}(M,\mathbb{R}) \longrightarrow   \mathbb{R}, ([\alpha], [\beta]) \longmapsto  \displaystyle \int_{M}\alpha\wedge\beta,$
is the usual Poincar\'e pairing.
This map, $\widetilde{L}_\Omega^\delta$ serves as a natural extension of the usual flux homomorphism defined for diffeomorphisms. It extends the flux into the realm of continuous but non-differentiable volume-preserving transformations, and as we will show, the map is a non-trivial invariant.

This motivated the following factorization result that generalizes Proposition 2.4 of \cite{HLGF}. This result will be key to the proof of our main theorem.

\begin{proposition}\label{pro3}  Let $(M,\Omega)$ be a closed oriented manifold, $\alpha $ be a closed $1-$form, and  
	$ H\in \pcal\homeo_0(M)$. Fix $\delta \in ]0,r(g)[$.  Then, we have 
	$ \displaystyle T_H^\delta([\alpha]) 
	= \langle  [\alpha], \widetilde{S}_\Omega(\Phi)\rangle +  \int_M(\int_{\chi_\delta}\alpha )\Omega,
	$ 
	for any   $\Phi \in \ahom_\delta^\Omega(H)$ 
	where $ \widetilde{S}_\Omega$ is the usual flux  invariant for smooth isotopies. 
\end{proposition}
\textbf{Proof}:  
The proof is a direct application of the definitions of $\rdelta(H, \alpha)$ (equation \ref{Functor3}) and of $T_H^\delta$. In particular, we have:
$T_H^\delta([\alpha]) = \int_M \rdelta(H, \alpha) \Omega.$
We also know that, by definition:
$$
\rdelta(H, \alpha)(p) = \left\{
\begin{array}{lccc}
	( \flux_\alpha(H)(1))(p) &  \text{ if }  &  H\in \iso(M)\\
	( \flux_\alpha(\Phi)(1))(p) +  \int_{\chi_\delta^p}\alpha, & \text{ if } & H \in (\pcal(\homeo(M),\text{Id})\setminus \iso(M)),
\end{array}
\right.  
$$
for $\Phi \in \ahom_\delta^\Omega(H)$. Thus,
$$T_H^\delta([\alpha]) =  \int_M \left( \flux_\alpha(\Phi)(1) +  \int_{\chi_\delta}\alpha \right) \Omega = \int_M  \flux_\alpha(\Phi)(1) \Omega + \int_M (\int_{\chi_\delta}\alpha ) \Omega. $$
The first term is the value of $\widetilde{S}_\Omega(\Phi)$ acting on the cohomology class of $\alpha$. The second term is just what we have in the statement of the proposition. $\Box$

\begin{lemma}\label{lem4} Fix $\delta \in ]0,r(g)[$.  
	Assume that $H, H'\in \pcal\homeo_0(M)$ 
	have the same endpoints. 
	If there exists $z\in M$ for which each of the orbits $\ocal_z^H$ and 
	$\ocal_z^{H'}$ is piece-wise smooth so that the 
	$2-$chain delimited by $\ocal_z^{H'}$ and $\ocal_z^{H}$ 
	are null-homologous, then  
	$  \widetilde{L}_\Omega^\delta(H)  =   \widetilde{L}_\Omega^\delta(H')$. 
\end{lemma}
\textbf{Proof}:  This follows from Corollary \ref{lem1} :  Let $h$ be the time-one map of 
$H$ and  for each closed  $1-$form $\alpha$, derive from Corollary \ref{lem1} that 
\begin{eqnarray}
	- \rdelta(H,\alpha)(z) +\int_M \mathbf  R_\delta(H,\alpha)\Omega  & =& \chi(h,\alpha)(z)
	=  - \rdelta(H',\alpha)(z) + \int_M \rdelta(H',\alpha)\Omega ,\nonumber
\end{eqnarray}
namely, 
\begin{eqnarray} \langle [\alpha], \widetilde{L}_\Omega^\delta(H) -  \widetilde{L}_\Omega^\delta(H')\rangle
	&: =&\int_M \rdelta(H,\alpha)\Omega -\int_M \rdelta(H',\alpha)\Omega\nonumber\\
	& = & \rdelta(H,\alpha)(z)    - \rdelta(H',\alpha)(z) \nonumber\\
	&=& \int_{\ocal_z^{H}}\alpha - \int_{\ocal_z^{H'}}\alpha = 0,
\end{eqnarray}
for all $[\alpha]\in H^{1}(M,\mathbb{R})$. The last step comes from the fact that the 2-chain delimited by $\ocal_z^H$ and $\ocal_z^{H'}$ is null-homologous. Thus, $  \widetilde{L}_\Omega^\delta(H)  =   \widetilde{L}_\Omega^\delta(H')$. $\Box$

\begin{lemma}\label{lem5}  Let $H\in \pcal\homeo_0(M)$  be a loop at the identity map, and fix $\delta \in ]0,r(g)[$. The following hold.   
	\begin{enumerate}
		\item\label{sf1} For each closed $1-$form
		$\alpha,$ the  function $x\longmapsto \rdelta(H,\alpha)(x)$ is constant  and agrees with  
		$\langle [\alpha], \widetilde{L}_\Omega^\delta(H)\rangle$. 
		\item\label{sf2} Then,  $\widetilde{L}_\Omega^\delta(H) = 0$,  if and only if, there exists 
		$z\in M$ for which the orbit $\ocal_z^{H} $ is a piece-wise smooth boundary.

	\end{enumerate}
\end{lemma}

\begin{proof} 
	The item \ref{sf1}  follows directly from Corollary \ref{lem1}, which states that $\|\alpha \|_{L^2}\chi(h,\alpha)(z) 
	= -\left(\rdelta (H,\alpha)(z) - \int_M \rdelta(H,\alpha)\, \Omega\right)$. If $H$ is a loop at the identity, then $h$ is the identity map. Therefore,  $\chi(h,\alpha)(z) = 0$, which means that $\rdelta (H,\alpha)(z) = \int_M \rdelta(H,\alpha)\, \Omega$ is constant, and, by definition, equals to $\langle [\alpha], \widetilde{L}_\Omega^\delta(H)\rangle$.  The second item also follows directly from Corollary \ref{lem1}.  If $\widetilde{L}_\Omega^\delta(H) = 0$, then $\rdelta(H,\alpha)(x) = 0$, which implies that the integral of $\alpha$ over the orbit of any point $z$ is zero, and the orbit can be seen as a piecewise smooth boundary. Conversely, if there is a point $z$ such that $\ocal_z^{H}$ is a piecewise smooth boundary, then the integral of any closed 1-form over the orbit will be zero, which implies that  $\rdelta(H,\alpha)(x) = 0$, therefore, $\widetilde{L}_\Omega^\delta(H) = 0$.
\end{proof}
	
	\subsection{On the kernel of  $\ker L _\Omega^\delta$ }
	With the present construction, let  $\widetilde{\pcal\homeo_0(M,\Omega)}$ denote the quotient space of $\pcal\homeo_0(M, \Omega)$ with respect 
	to the following equivalence relation : $H, H'\in \pcal\homeo_0(M,\Omega)$ are equivalent if and only if $H$ and $H'$ are homotopic in $\homeo_0(M,\Omega)$ relatively to fixed end points. 
	Let denote by $\pi_1 \left(\homeo_0(M,\Omega) \right)$ the space of all loops at the identity map in $\pcal\homeo_0(M,\Omega)$, and set 
	$\tilde \Gamma_\Omega^\delta:= \widetilde{L}_\Omega^\delta\left( \pi_1 \left(\homeo_0(M,\Omega) \right)\right).$ 
	Therefore, there is a group homomorphism 
	$L_\Omega^\delta: \homeo_0(M,\Omega) \rightarrow H^{n-1}(M,\mathbb{R})/\tilde \varGamma_\Omega^\delta,$ such that the following 
	diagram commutes,  
	$$\begin{array}{ccc}
		\widetilde{\homeo_0(M,\Omega)} & \stackrel{\stackrel{\sim}{L_\Omega^\delta}}{\longrightarrow} & H^{n-1}(M,\mathbb{R}) \\ 
		\tilde \pi_1\downarrow &     & \downarrow \tilde \pi_2 \\ \homeo_0(M,\Omega) &  \stackrel{L_\Omega^\delta}{\longrightarrow} &
		H^{n-1}(M,\mathbb{R})/\tilde \Gamma_\Omega^\delta\hspace{1cm}(II), 
	\end{array}$$
	where $\tilde \pi_i$, $i = 1, 2$ are projection maps.

	\begin{example}\label{EX2}
		Consider the torus $\mathbb{T}^2$, with coordinates $(\theta_1, \theta_2)$, where $\theta_1, \theta_2 \in [0, 2\pi]$, and equipped with the flat Riemannian metric $g_0 = d\theta_1^2 + d\theta_2^2$. Let $\Omega = d\theta_1 \wedge d\theta_2$ be the standard volume form on $\mathbb{T}^2$. Let $\epsilon > 0$ be a small parameter, and let $\mu : [0, 2\pi] \rightarrow [0, +\infty[$ be a continuous function with the following properties:
		\begin{itemize}
			\item $\mu(\theta_1) = 0$ for $\theta_1 \in [2\pi - \epsilon, 2\pi]$.
			\item $\mu$ is not differentiable at $\theta_1 = 0$, having a sharp corner.
			\item $\mu'(\theta_1) < 0$ for $\theta_1 \in ]0, 2\pi - \epsilon[$, and is smooth.
		\end{itemize}
		
		Define the mapping $R_{\mu}(\theta_1, \theta_2) = (\theta_1, \theta_2 + \mu(\theta_1))$. This map is a homeomorphism of $\mathbb{T}^2$, and because it preserves the volume form $\Omega$, it is a volume-preserving homeomorphism, $R_{\mu} \in \text{Homeo}_0(\mathbb{T}^2, \Omega)$. However, due to the non-smoothness of $\mu$ at $\theta_1=0$, the map is not differentiable at $\theta_1=0$.
		
		\begin{tikzpicture}[scale=0.9]
			\begin{scope}[shift={(-4cm,0)},rotate=-20]
				\draw[thick,fill=gray!20] (0,0) ellipse (1.2cm and 0.4cm);
				\draw[thick,fill=gray!20] (0,-1.2cm) ellipse (1.2cm and 0.4cm);
				\draw[thick] (1.2cm,0) arc (0:180:1.2cm and 0.4cm);
				\draw[thick] (1.2cm,-1.2cm) arc (0:180:1.2cm and 0.4cm);
				\draw[thick,dashed] (-1.2cm,0) arc (180:360:1.2cm and 0.4cm);
				\draw[thick,dashed] (-1.2cm,-1.2cm) arc (180:360:1.2cm and 0.4cm);
				\draw[thick,fill=gray!30] (0,-1.2cm) ellipse (0.8cm and 0.32cm);
				\draw[thick,fill=gray!30] (0,-1.2cm) ellipse (0.76cm and 0.3cm);
				
				\node[above,rotate=20] at (2.2cm,0) {$\theta_1$};
				\node[above,rotate=20] at (0.4cm,-0.8cm) {$\theta_2$};
				\draw[thick, red]  (1.2cm,-0.15cm) arc (0:180:1.2cm and 0.3cm) ;
				\draw[thick, blue] (0.5cm,-1.2cm) arc (0:180:0.5cm and 0.15cm) ;
				\node[above,rotate=-20] at (0,-1.6cm) {Tore};
				
				\draw[->, thick, shorten >=0.1cm] (0, -0.3cm) to [out=-10,in=180] (-1.5cm, -1.4cm) ;
			\end{scope}
			
			\begin{scope}[shift={(3cm,0)}]
				\draw[->] (-0.5cm,0) -- (6cm,0) node[right] {$\theta_1$};
				\draw[->] (0,-1.2cm) -- (0,4cm) node[above] {$\mu(\theta_1)$};
				
				\draw[domain=0:5.8,samples=50,thick] plot (\x,{1.2*exp(-0.2*(\x))}); 
				\draw[domain=0:5.8,samples=50,thick,dashed,red] plot (\x,{1.2*exp(-0.2*(\x)) -0.1*sin(3*deg(\x))}); 
				\draw[dashed] (5.2cm,0) -- (5.2cm, -0.1cm);
				\node[below] at (5.2cm,-0.2cm) {$2\pi$};
				\draw[->, thick] (0.8cm, 0.2cm) to [out=90,in=-180] (0.4cm, 2.2cm) ;
				
				\node[above,align=left] at (0,2.8cm) {Graphe de $\mu$};
			\end{scope}
			\draw[->, thick, shorten >=0.1cm] (-1cm,-1.4cm) to [out=0,in=180] (0.5cm, 1cm) ;
			\node at (-1.5cm, 1.2cm)  {$R_{\mu}:(\theta_1,\theta_2)\mapsto (\theta_1, \theta_2+\mu(\theta_1))$};
		\end{tikzpicture}
		
		The goal is to compute the extended flux invariant $\stackrel{\sim}{L_\Omega^\delta}$ for the continuous path from the identity to $R_\mu$ defined by $R_\mu^t(\theta_1,\theta_2) = (\theta_1, \theta_2 + \mu(t\theta_1))$. This path is a natural choice for connecting the identity to $R_\mu$ because it can be seen as the straight line path in the space of maps. However, the usual flux invariant $\widetilde{S_\Omega}$ is not directly defined for this path because the maps $R_\mu^t$ are not differentiable. To address this issue, we will approximate the function $\mu$ by a smooth function $\nu : [0, 2\pi] \rightarrow [0, +\infty[$, that fixes zero, such that $|\nu - \mu|_{C^0} < \delta$.
		
		\begin{tikzpicture}[scale=1.2]
			\draw[->] (-0.5,0) -- (7,0) node[right] {$\theta_1$};
			\draw[->] (0,-0.5) -- (0,4) node[above] {$\mu(\theta_1), \nu(\theta_1)$};
			
			\newcommand{\muFunc}[1]{
				ifthenelse(#1<1.5, {1.2 - 0.4 * #1}, {1.2*exp(-0.5*(#1-1.5))})
			}
			
			\newcommand{\nuFunc}[1]{
				1.2 * exp(-0.3*#1) * (1 + 0.2*sin(deg(4*#1)))
			}
			\newcommand{\deltaValue}{0.3}
			
			\draw[thick,blue, domain=0:6.28, samples=100] plot (\x, {\muFunc{\x}});
			
			\draw[thick,red, dashed, domain=0:6.28, samples=100] plot (\x, {\nuFunc{\x}});
			
			\draw[fill=gray!20, opacity=0.5, domain=0:6.28, samples=100] plot (\x,{\muFunc{\x} - \deltaValue}) -- plot [domain=6.28:0, samples=100](\x, {\muFunc{\x} + \deltaValue}) --cycle;
			
			\node[above right] at (6.28, 0) {$2\pi$};
			\node[above left] at (0,0) {$0$};
			\node[right] at (6, 2) {$\mu(\theta_1)$};
			\node[right] at (6, 1.5) {$\nu(\theta_1)$};
			
			\draw[<->] (2, {(\muFunc{2} - \deltaValue)}) -- (2, {(\muFunc{2} + \deltaValue)}) node[midway, left] {$\delta$};
			
			\node[align=left, below] at (6, -0.5) {The smooth function $\nu$ is close to $\mu$, in such a way that $ |\nu - \mu|_{C^0} \le \delta$.};
		\end{tikzpicture}
		
		The parameter $\delta$ dictates how close the smooth approximation is. Assume that $\nu$ has the same support as $\mu$. The path $\{R_\nu^t\}: t \mapsto (\theta_1, \theta_2 + \nu(t\theta_1))$ is a smooth isotopy (a path of diffeomorphisms) and belongs to $\ahom_\delta^\Omega(\{R_\mu^t\})$, which is the set of all smooth isotopies homotopic relatively to fixed endpoint to an approximate $\Theta$ of $\{R_\mu^t\}$ such that $d_{C^0}(\Theta, \{R_\mu^t\})< \delta$. Then, by definition, we have
		\[
		\stackrel{\sim}{L_\Omega^\delta}(\{R_{\mu}^t\}) =   \widetilde{S_\Omega}(\{R_{\nu}^t\}) + T(R_{\mu}^1, R_{\nu}^1),
		\]
		where $T(R_{\mu}^1, R_{\nu}^1)$ is an element in the first de Rham group of $\mathbb{T}^2$, such that
		\[
		\langle[\alpha], T(R_{\mu}^1, R_{\nu}^1)\rangle = \int_{\mathbb{T}^2}\left(\int_{\chi_\delta}\alpha\right)\Omega,
		\]
		for all closed 1-forms $\alpha$ on $\mathbb{T}^2$, and $\chi_\delta$ is a minimizing geodesic (in the flat metric) from $R_{\mu}^1(x)$ to $R_{\nu}^1(x)$ for all $x \in \mathbb{T}^2$. We will calculate each of these terms in turn.
		
		\textbf{Calculation of the standard flux:}
		We compute the flux of the smooth isotopy $\{R_{\nu}^t\}$. The time derivative of the path is:
		\[
		\frac{d}{dt} R_\nu^t (\theta_1, \theta_2) = \frac{d}{dt} (\theta_1, \theta_2 + \nu(t\theta_1)) = (0, \nu'(t\theta_1)\theta_1).
		\]
		
		The interior product of this vector with the volume form $\Omega$ is:
		\[
		\iota_{\dot{R_\nu^t}} \Omega  = \iota_{(0, \nu'(t\theta_1)\theta_1)}(d\theta_1 \wedge d\theta_2) = \nu'(t\theta_1)\theta_1 d\theta_1.
		\]
		Therefore, the flux is:
		\begin{align*}
			\widetilde{S_\Omega}(\{R_{\nu}^t\}) &= \left[\int_0^1 \iota_{\dot{R_\nu^t}} \Omega \, dt\right]  \\
			&=  [\nu(\theta_1)d\theta_1].
		\end{align*}
		In particular, when this map acts on the class of a $1$-form $[\alpha]$, it produces the value:
		\[
		\langle [\alpha], \widetilde{S_\Omega}(\{R_{\nu}^t\})\rangle =  \int_{\mathbb{T}^2} \nu(\theta_1)\alpha\left(\frac{\partial}{\partial\theta_2}\right)  d\theta_1 \wedge d\theta_2.
		\]
		
		\textbf{Calculation of the correction term $T(R_{\mu}^1, R_{\nu}^1)$:}
		The correction term $T(R_{\mu}^1, R_{\nu}^1)$ measures the difference between the orbits of $R_\mu^1$ and $R_\nu^1$ as seen by closed 1-forms. For a given point $(\theta_1, \theta_2) \in \mathbb{T}^2$, $R_{\mu}^1(\theta_1, \theta_2) = (\theta_1, \theta_2 + \mu(\theta_1))$, and $R_{\nu}^1(\theta_1, \theta_2) = (\theta_1, \theta_2 + \nu(\theta_1))$. The minimizing geodesic $\chi_\delta$ between $R_{\mu}^1(\theta_1, \theta_2)$ and $R_{\nu}^1(\theta_1, \theta_2)$ is simply a vertical line segment (in the flat metric) in the torus. This segment starts at $(\theta_1, \theta_2 + \mu(\theta_1))$ and ends at $(\theta_1, \theta_2 + \nu(\theta_1))$. Thus, we can parametrize this line segment as:
		\[
		\chi_\delta(s) = (\theta_1, \theta_2 +  \mu(\theta_1) +  s (\nu(\theta_1) - \mu(\theta_1))).
		\]
		
		Let $\alpha$ be a closed 1-form. Since we are in the flat torus, we can write $\alpha = a d\theta_1 + b d\theta_2$, for some constants $a$ and $b$. Thus,
		\begin{align*}
			\int_{\chi_\delta} \alpha &= \int_0^1 \left(a \frac{d \theta_1}{ds} + b \frac{d (\theta_2 + \mu(\theta_1) +  s (\nu(\theta_1) - \mu(\theta_1)))}{ds}\right) ds \\
			&= \int_0^1 (a \cdot 0 + b (\nu(\theta_1) - \mu(\theta_1))) ds \\
			&= b (\nu(\theta_1) - \mu(\theta_1)).
		\end{align*}
		Therefore,
		\[
		\langle [\alpha], T(R_{\mu}^1, R_{\nu}^1)\rangle = \int_{\mathbb{T}^2} \left(\int_{\chi_\delta}\alpha\right)\Omega = \int_{\mathbb{T}^2} b (\nu(\theta_1) - \mu(\theta_1)) d\theta_1 d\theta_2.
		\]
		If we consider $\alpha = d\theta_2$, then we have that $a=0$ and $b=1$, thus:
		\[
		\langle [d\theta_2], T(R_{\mu}^1, R_{\nu}^1)\rangle =  \int_{\mathbb{T}^2} (\nu(\theta_1) - \mu(\theta_1)) d\theta_1 d\theta_2 =  2\pi \int_0^{2\pi} (\nu(\theta_1) - \mu(\theta_1)) d\theta_1.
		\]
		If we consider $\alpha = d\theta_1$, then $a=1$ and $b=0$, so:
		\[
		\langle [d\theta_1], T(R_{\mu}^1, R_{\nu}^1)\rangle = 0.
		\]
		So, we can write $T(R_{\mu}^1, R_{\nu}^1)$ as:
		\[
		T(R_{\mu}^1, R_{\nu}^1) = \left( 2\pi \int_0^{2\pi} (\nu(\theta_1) - \mu(\theta_1)) d\theta_1\right)  [d\theta_2].
		\]
		
		\textbf{Extended flux calculation:}
		Thus,
		\begin{align*}
			\stackrel{\sim}{L_\Omega^\delta}(\{R_{\mu}^t\}) &= \widetilde{S_\Omega}(\{R_{\nu}^t\}) + T(R_{\mu}^1, R_{\nu}^1) \\
			&= [\nu(\theta_1)d\theta_1]  + \left( 2\pi \int_0^{2\pi} (\nu(\theta_1) - \mu(\theta_1)) d\theta_1\right) [d\theta_2].
		\end{align*}

	{\bf Calculation of the correction term $T(R_{\mu}^1, R_{\nu}^1)$:}  The correction term $T(R_{\mu}^1, R_{\nu}^1)$ measures the difference between the orbits of $R_\mu^1$ and $R_\nu^1$ as seen by closed 1-forms. For a given point $(\theta_1, \theta_2) \in \mathbb{T}^2$,  $R_{\mu}^1(\theta_1, \theta_2) = (\theta_1, \theta_2 + \mu(\theta_1))$, and  $R_{\nu}^1(\theta_1, \theta_2) = (\theta_1, \theta_2 + \nu(\theta_1))$. The minimizing geodesic $\chi_\delta$ between $R_{\mu}^1(\theta_1, \theta_2)$ and $R_{\nu}^1(\theta_1, \theta_2)$ is simply a vertical line segment (in the flat metric) in the torus. This segment starts at $ (\theta_1, \theta_2 + \mu(\theta_1))$ and ends at $(\theta_1, \theta_2 + \nu(\theta_1))$. Thus, we can parametrize this line segment as:
	\[
	\chi_\delta(s) = (\theta_1, \theta_2 +  \mu(\theta_1) +  s (\nu(\theta_1) - \mu(\theta_1)) ).
	\]
	
	Let $\alpha$ be a closed 1-form. Since we are in the flat torus, we can write $\alpha = a d\theta_1 + b d\theta_2$, for some constants $a$ and $b$. Thus,
	\begin{align*}
		\int_{\chi_\delta} \alpha &= \int_0^1 (a \frac{d \theta_1}{ds} + b \frac{d (\theta_2 + \mu(\theta_1) +  s (\nu(\theta_1) - \mu(\theta_1)) )}{ds})ds \\
		&= \int_0^1 (a \cdot 0 + b (\nu(\theta_1) - \mu(\theta_1))) ds \\
		&= b (\nu(\theta_1) - \mu(\theta_1)).
	\end{align*}
	Therefore,
	\[
	\langle [\alpha], T(R_{\mu}^1, R_{\nu}^1)\rangle = \int_{\mathbb{T}^2} \left(\int_{\chi_\delta}\alpha\right)\Omega = \int_{\mathbb{T}^2} b (\nu(\theta_1) - \mu(\theta_1)) d\theta_1 d\theta_2.
	\]
	If we consider $\alpha = d\theta_2$, then we have that $a=0$ and $b=1$, thus:
	\[
	\langle [d\theta_2], T(R_{\mu}^1, R_{\nu}^1)\rangle =  \int_{\mathbb{T}^2} (\nu(\theta_1) - \mu(\theta_1)) d\theta_1 d\theta_2 =  2\pi \int_0^{2\pi} (\nu(\theta_1) - \mu(\theta_1)) d\theta_1.
	\]
	If we consider $\alpha = d\theta_1$, then $a=1$ and $b=0$, so:
$
	\langle [d\theta_1], T(R_{\mu}^1, R_{\nu}^1)\rangle = 0.
	$
	So, we can write $T(R_{\mu}^1, R_{\nu}^1)$ as:
	$
	T(R_{\mu}^1, R_{\nu}^1) = \left( 2\pi \int_0^{2\pi} (\nu(\theta_1) - \mu(\theta_1)) d\theta_1\right)  [d\theta_2].
	$\\
	{\bf Extended flux calculation:} Thus,
	\begin{align*}
		\stackrel{\sim}{L_\Omega^\delta}(\{R_{\mu}^t\}) &= \widetilde{S_\Omega}(\{R_{\nu}^t\}) + T(R_{\mu}^1, R_{\nu}^1) \\
		&= [\nu(\theta_1)d\theta_1]  + \left( 2\pi \int_0^{2\pi} (\nu(\theta_1) - \mu(\theta_1)) d\theta_1\right) [d\theta_2]. 
	\end{align*}
	\begin{center}
	\begin{tikzpicture}[scale=1.1]
		
		\draw[thick, blue, decorate, decoration={snake,amplitude=0.1cm,segment length=0.2cm}] (0,0) to (3,0);
		\node[above] at (1.5,0.2) {$\widetilde{S}_\Omega(\Phi)$};
		
		\draw[->, thick, dashed, red] (3,0) to[out=0, in=180] (4.5,-1);
		\node[below right] at (4.5,-1) {$T(R_{\mu}^1, R_{\nu}^1)$};
		
		\draw[->, thick, black] (0,0) to[out=-30,in=180] (4.5,-1);
		\node[above] at (2,-0.8) {$\stackrel{\sim}{L_\Omega^\delta}(H)$};
		
		\node[below,align=left] at (3,-2) {Extended flux as smooth flux with an added correction term : $\stackrel{\sim}{L_\Omega^\delta}(H) = \widetilde{S_\Omega}(\Phi) + T(R_{\mu}^1, R_{\nu}^1)$.};

	\end{tikzpicture}
	\end{center}
	This result demonstrates that the extended flux indeed provides a value for the flux of a $C^0$ path, even though it is not differentiable.\\

	{\bf  Fathi's mass flow:}
	The Fathi's mass flow of $\{R_\mu^t\}$ is given by:
	\[
	\widetilde{\theta}(\{R_{\mu}^t\})(f) = \left\langle [f^\ast\sigma], T(R_{\mu}^1, R_{\nu}^1) + [\nu(\theta_1) d\theta_1]\right\rangle,
	\]
	for all continuous mappings $f$ from $\mathbb{T}^2$ to the circle $\mathbb{S}^1$, where $\sigma$ is the volume form on $\mathbb{S}^1$. That is, 
	$$ \widetilde{\theta}(\{R_{\mu}^t\})(f) =  \widetilde{\theta}(\{R_{\nu}^t\})(f) + \left\langle [f^\ast\sigma], T(R_{\mu}^1, R_{\nu}^1) \right\rangle, $$
	for all continuous mappings $f$ from $\mathbb{T}^2$ to the circle $\mathbb{S}^1$, where $\sigma$ is the volume form on $\mathbb{S}^1$. The first term corresponds to the usual smooth flux (in the symplectic case), and the term involving $T(R_{\mu}^1, R_{\nu}^1) $ represents the necessary correction needed due to the lack of smoothness in the path.  This is where the non-smoothness of $\{R_{\mu}^t\}$ is addressed. The limit ensures that the integral is properly defined even when the path is not differentiable at all times. The equation shows how the Poincaré duality theorem works even in the presence of non-smooth paths. In this specific example, it is known that the Fathi's mass flow coincides with the image of the path by the extended flux. Since the flux group  $\Gamma_\Omega$ of $\mathbb{T}^2$ is isomorphic to $\mathbb{Z}\oplus\mathbb{Z}$, we have that $\stackrel{\sim}{L_\Omega^\delta}(\{R_{\mu}^t\})$ belongs to  $\mathbb{Z}\oplus\mathbb{Z}$. This shows that we can use the extended flux to obtain a well defined invariant for homeomorphisms.
\end{example}
	\begin{proposition}\label{pro4}  
	Let $H := \{h_t\}\in \pcal\homeo_0(M, \Omega)$. Then, $ h_1\in \ker L_\Omega$ if and only if $\stackrel{\sim}{L_\Omega^\delta}(H)\in\tilde  \Gamma_\Omega^\delta$.
\end{proposition}
	
	
We equip the subgroup $\tilde \Gamma_\Omega\subset  H^{n-1}(M,\mathbb{R})$, with the natural topology  arising from the vector 
space structure of $ H^{n-1}(M,\mathbb{R})$.  
	\begin{lemma}\label{Disc-0}
		The group $\tilde{\Gamma}_\Omega^\delta$ is discrete.
	\end{lemma}
	
	\begin{proof}
		This follows by adapting the proof of Theorem 4.6 of \cite{HLGF}.
		Let $H\in \pi_1 \left(\text{Homeo}_0(M, \Omega)\right)$.
		The Poincar\'e pairing $\langle\cdot,\cdot\rangle : H^1(M,\mathbb{R})
		\times H^{(n-1)}(M,\mathbb{R}) \longrightarrow \mathbb{R},$ being
		continuous, let $\mu_0$ denote any
		positive constant such that
		\begin{equation}\label{Disc1}
			\big|\langle A, B \rangle\big|\leqslant \mu_0\|A\|_{L^2}\|B\|_{L^2},
		\end{equation}
		for
		all $(A, B)\in H^1(M,\mathbb{R})
		\times H^{(n-1)}(M,\mathbb{R})$. Let $\alpha$ be any closed $1-$form such that its de Rham
		cohomology class
		$[\alpha]$ belongs to the closed unit ball $\overline{\mathbb{B}^1(0,1)}\subset H^1(M,\mathbb{R}) $,
		and consider the continuous linear mapping
		$L_\alpha : H^{(n-1)}(M,\mathbb{R}) \longrightarrow \mathbb{R}, B\longmapsto \langle [\alpha], B \rangle$. It
		follows from (\ref{Disc1}) that $| L_\alpha(B)| = |\langle [\alpha], B \rangle| \leqslant
		\mu_0\| B \|_{L^2}$, for all
		$B\in H^{(n-1)}(M,\mathbb{R})$ because $ [\alpha] \in \overline{\mathbb{B}^1(0,1)}$.
		
		Therefore, we have two possibilities: If $\mathcal{O}_{z}^{H}$ is null-homologous, then
		Lemma \ref{lem5} implies that $\widetilde{L}_\Omega^\delta(H) = 0$. If $\mathcal{O}_{z}^{H}$ is not homologically trivial, then assume
		$H^{(n-1)}(M, \mathbb{R})$ equipped with the $L^2-$Hodge norm $\|\cdot\|_{L^2}$. Since $\widetilde{L}_\Omega^\delta(H)$ is considered as arbitrarily small in $H^{(n-1)}(M,\mathbb{R})$, let
		$l$ be any arbitrary positive integer, and assume that
		\begin{equation}\label{INT}
			\|\widetilde{L}_\Omega^\delta(H) \|_{L^2}< \frac{1}{2l\mu_0 },
		\end{equation}
		where $\mu_0$ is the constant defined in (\ref{Disc1}). On the other hand, we proceed to choose a closed 1-form $\alpha_0$ on $M$ with $[\alpha_0] \in \overline{\mathbb{B}^1(0,1)}$ such that $\mathcal{R}(H, \alpha_0) \neq 0$. Since $\mathcal{O}_z^H$ is not homologically trivial, the homology class represented by this loop is non-zero in $H_1(M, \mathbb{R})$.  By the fundamental connection between homology and integration, there exists a closed 1-form $\alpha_0$ such that
		\[
		\oint_{\mathcal{O}_z^H} \alpha_0 \neq 0.
		\]
		Moreover, we can choose $\alpha_0$ such that $[\alpha_0] \in \overline{\mathbb{B}^1(0,1)}$.
		Now, we have that there exists $\alpha_0$ such that $\mathcal{R}(H, \alpha_0)(z) = \int_{\mathcal{O}_z^H} \alpha_0  \neq 0$.
		Using Lemma \ref{lem5} together with the inequality (\ref{INT}), we derive that
		$$0< \left| \mathcal{R}(H,\alpha_0) \right|
		= \big|\langle [\alpha_0], \widetilde{L}_\Omega^\delta(H)\rangle\big|
		\leqslant \mu_0\|\widetilde{L}_\Omega^\delta(H) \|_{L^2} < \frac{1}{l},$$
		for all positive integer $l$. This is a contradiction because the last term on right-hand side
		of the above inequalities tends to zero as $l$ tends to infinity.
		Therefore, $\widetilde{L}_\Omega^\delta(H)$ must be trivial. This shows that any arbitrary small element in $ \tilde \Gamma_\Omega^\delta$ is necessary trivial, i.e., the
		trivial element in $ \tilde \Gamma_\Omega^\delta$ has an isolated open neighborhood and since translation maps are homeomorphisms, then the latter isolated open neighborhood of trivial element in $ \tilde \Gamma_\Omega^\delta$ generates an isolated open neighborhood of any other element in $ \tilde \Gamma_\Omega^\delta$.
	\end{proof}
Lemma \ref{Disc-0} means that the types of "twists" that can be produced by these transformations are quantized. This limits the possible long-term behaviors of the system.\\

	{\it Question(A):} From the construction, we have $ \Gamma_\Omega\subset \tilde \Gamma_\Omega^\delta$. But, what about the converse inclusion:  $ \tilde\Gamma_\Omega^\delta\subset \Gamma_\Omega$?\\
	The above question raises the problem of $C^0-$rigidity of the usual flux group $\Gamma_\Omega$. 
	\begin{itemize}
	\item 	If $ \tilde\Gamma^\delta_{\Omega} \subset \Gamma_{\Omega} $: this implies that all "perturbed" fluxes in a small neighborhood around the identity can be realized as true smooth volume-preserving paths  fluxes. This scenario suggests a form of $C^0$-rigidity, indicating that the space of vanishing flux isotopies is stable under small $C^0$ perturbations—even those that only approximately preserve the volume form.
	\item Conversely, if $\tilde \Gamma^{\delta}_{\Omega} \nsubseteq \Gamma_{\Omega} $: 
		it indicates the existence of small perturbations around the identity that can still preserve the volume form but yield "perturbed" fluxes that cannot be created via a  smooth volume-preserving paths. This suggests a breakdown of $C^0$-rigidity.
	\end{itemize}
	 By a result of Smale, the inclusion $Diff(M) \hookrightarrow \homeo(M)$ induces an isomorphism on fundamental groups: $\pi_1(Diff(M)) \cong \pi_1(\homeo(M))$.  
	 This means that any loop in $\pi_1(\homeo(M))$ can be represented by a loop in $\pi_1(Diff(M))$ \cite{Hirs76}.  On account of the locally contractibility of $ Diff^{\Omega,\infty}_0(M)$ due to Thurston \cite{TH}, we prove the following weak version of the above Smale result for volume-preserving transformations. 

		\begin{corollary}\label{Den-1}
			The group $ \pi_1(Diff^{\Omega,\infty}_0(M))$ is $C^0-$dense in $\pi_1(\homeo_0(M,\Omega))$. 
		\end{corollary}
		\begin{proof}
			Let $H\in \pi_1(\homeo_0(M,\Omega))$. For each positive integer $n$ such that $1/n \leqslant \delta$, select $\Phi_n = (\phi^t_n)\in \ahom_\delta^\Omega(H)$ such that 
			\( d_{C^0}(\Phi_n, H )\leq 1/n\). Since the time-one map of $H$ is the identity map, then for n sufficiently large, the time-one map $\phi_n^1$ belongs to a small open ball $\mathcal{U}_{id} $ centered  at the identity map in $\homeo_0(M,\Omega)$ of radius $1/n$ with respect to the  $C^0-$metric (if necessary, increase $n$).  On account of the locally path-connectedness of $Homeo(M, \Omega)$, we can find a volume-preserving isotopy $\Theta_n$ in $\mathcal{U}_{id} $ from $\phi_n^1$ to identity map for $n$ sufficiently large.
			Apply Theorem \ref{Theo-4} to approximate $\Theta_n$ by a continuous isotopy $\bar{\varPsi}_n$ in $Diff^{\Omega,\infty}(M)$ (relatively to fixed endpoints) such that $ d_{C^0}(\bar{\varPsi}_n, \Theta_n)\leqslant 1/n$. By Thurston's local contractibility result for $Diff^{\Omega,\infty}_0(M)$, we can approximate $\Theta_n$ by a continuous, volume-preserving isotopy $\Psi_n$ in $Diff^{\Omega,\infty}_0(M)$, keeping the endpoints fixed, such that $d_{C^0}(\Psi_n, \Theta_n) \leqslant 1/n$.
			Concatenate the path $\Phi_n$ with the reverse of the path $\Psi_n$ to create a loop $\Upsilon^n$ in $Diff^{\Omega,\infty}_0(M)$. We have
			\begin{align*}
				d_{C^0}(\Upsilon^n, H) &\leqslant d_{C^0}(\Phi_n, H) + d_{C^0}(Id, \Psi_n) \\
				&\leqslant d_{C^0}(\Phi_n, H) + d_{C^0}(\Theta_n, \Psi_n) + d_{C^0}(Id, \Theta_n)\\
				&\leqslant \frac{1}{n} + \frac{1}{n} + \frac{1}{n} = \frac{3}{n},
			\end{align*}
			for $n$ sufficiently large. Here, $Id$ denotes the constant map at the identity, and we use the fact that $\Theta_n$ is contained in $\mathcal{U}_{id}$, which has radius $1/n$.

		\end{proof}

		\begin{theorem}\label{Theo}($(C^0,\delta)-$Rigidity)
			The groups $\tilde \Gamma_\Omega^\delta$ and $\Gamma_\Omega$ coincide. That is,
			the group $\Gamma_\Omega$ is $(C^0,\delta)-$rigid.
		\end{theorem}
		
		\begin{proof}
			We want to show that $\tilde{\Gamma}_\Omega^\delta = \Gamma_\Omega$. We know that $\Gamma_\Omega \subseteq \tilde{\Gamma}_\Omega^\delta$. The goal is to prove the reverse inclusion: $\tilde{\Gamma}_\Omega^\delta \subseteq \Gamma_\Omega$. Let $B \in \tilde{\Gamma}_\Omega^\delta$. By definition, there exists $H \in \pi_1(\text{Homeo}_0(M, \Omega))$ such that $B = \widetilde{L}_\Omega^\delta(H)$. Thus, we're starting with the perturbed, extended flux of a loop in the space of volume-preserving homeomorphisms. Corollary \ref{Den-1} states that $\pi_1(\text{Diff}_0^{\Omega, \infty}(M))$ is $C^0$-dense in $\pi_1(\text{Homeo}_0(M, \Omega))$. This means that there exists a sequence of loops $\Upsilon^n \in \pi_1(\text{Diff}_0^{\Omega, \infty}(M))$ such that $d_{C^0}(\Upsilon^n, H) \to 0$ as $n \to \infty$. The sequence $\Upsilon_n$ is a sequence of volume preserving isotopies. Since $\Upsilon^n$ is a smooth volume-preserving isotopy, $\widetilde{L}_\Omega^\delta(\Upsilon^n) = \widetilde{S}_\Omega(\Upsilon^n)$, where $\widetilde{S}_\Omega$ is the usual (smooth) flux homomorphism. That is, the extended flux applied to a smooth loop is the same as the usual smooth flux. Therefore, $\widetilde{S}_\Omega(\Upsilon^n) \in \Gamma_\Omega$ for all $n$. We need to show that $\widetilde{L}_\Omega^\delta(\Upsilon^n)$ converges to $\widetilde{L}_\Omega^\delta(H)$ in $H^{n-1}(M, \mathbb{R})$. Consider the sequence $\{\widetilde{L}_\Omega^\delta(\Upsilon^n)\}$.  
			Given the $C^0$ convergence, $d_{C^0}(\Upsilon^n, H) \to 0$, as $n \rightarrow\infty$, we will have that $\Upsilon^n$ uniformly approximates $H$. Then the mapping $\mathbf{R}_\delta$ must satisfies:
			$$\lim_{n \rightarrow \infty} \int_M \mathbf{R}_\delta( \Upsilon^n,\alpha )\Omega = \int_M \mathbf{R}_\delta(H, \alpha)\Omega,$$
			for each closed $1-$form $\alpha$. This is equivalent to 
			$$\lim_{n \rightarrow \infty}  \langle [\alpha], \widetilde{L}_\Omega^\delta(\Upsilon^n) \rangle  = \langle [\alpha], \widetilde{L}_\Omega^\delta(H) \rangle,$$
			or each closed $1-$form $\alpha$. That is, $ \|\widetilde{L}_\Omega^\delta(\Upsilon^n)  -\widetilde{L}_\Omega^\delta(H)\|_{L^2}\rightarrow0, n\rightarrow\infty$.  On the other hand, 
			\begin{align*}
				d(\stackrel{\sim}{L_\Omega^\delta}(H), \Gamma_\Omega) & \leq 
				\|\stackrel{\sim}{L_\Omega^\delta}(H) -  \widetilde{S}_\Omega(\Upsilon^n)\|_{L^2} \\
				&=  \|\stackrel{\sim}{L_\Omega^\delta}(H) - \stackrel{\sim}{L_\Omega^\delta}(\Upsilon^n) \|_{L^2},
			\end{align*}
			for all $n$. 
			Since $ \|\widetilde{L}_\Omega^\delta(\Upsilon^n)  -\widetilde{L}_\Omega^\delta(H)\|_{L^2}\rightarrow0, n\rightarrow\infty$, then the discreteness of $\Gamma_\Omega$, together with the fact that $ \|\widetilde{L}_\Omega^\delta(\Upsilon^n)  -\widetilde{L}_\Omega^\delta(H)\|_{L^2}$ can be made arbitrarily  small implies that $\stackrel{\sim}{L_\Omega^\delta}(H) \in \Gamma_\Omega$. Therefore, $\tilde \Gamma_\Omega^\delta \subseteq \Gamma_\Omega$.
		\end{proof}

	\textbf{Information about $H_1(\text{Homeo}_0(M, \Omega); \mathbb{Z})$:} Because the extended flux provides a homomorphism from $\pi_1(\text{Homeo}_0(M, \Omega))$ to $H^{n-1}(M, \mathbb{R})$, it gives a lower bound on the rank of $H_1(\text{Homeo}_0(M, \Omega); \mathbb{Z})$.  In particular, if $\Gamma_\Omega$ is non-trivial, then $H_1(\text{Homeo}_0(M, \Omega); \mathbb{Z})$ must also be non-trivial.
	\begin{lemma}\label{lem9} 
		Let $H\in \pcal\homeo_0(M,\Omega)$ be a loop. If there exists $z\in M$ such that the $1-$cycle $\ocal_{z}^{H}$ is piecewise smooth, then it is null-homologous.
	\end{lemma}
	
	\begin{lemma}\label{lem10}  The group $\ker L_\Omega^\delta$ is a normal subgroup of $\homeo_0(M,\Omega) $.
	\end{lemma}
	
\begin{lemma}\label{lem13}  
	Any topological volume-preserving isotopy in \( \ker L_\Omega^\delta \) is a topological vanishing-flux isotopy.  
\end{lemma}  

\begin{proof}  
	Let \( H \) be any topological volume-preserving isotopy in \( \ker L_\Omega^\delta \). For each fixed \( t \), define the mapping \( Q_t: s \mapsto H^{st} \). Since \( Q_t(s) = H^{st} \in \ker L_\Omega^\delta \), it follows that \( \widetilde{L_\Omega^\delta}(Q_t) \in \Gamma_\Omega \). Thus, we have a continuous map   
	$ 
	Q: [0,1] \longrightarrow \Gamma_\Omega, \quad t \mapsto \widetilde{L_\Omega^\delta}(Q_t).  
	$  
	Since \( \Gamma_\Omega \) is discrete and \( [0,1] \) is connected, the map \( Q \) must be constant. Therefore, we have   
	$
	\widetilde{L_\Omega^\delta}(Q_t) = \widetilde{L_\Omega^\delta}(Q_0) = 0, \quad \forall t \in [0,1].  
	$ 
	In particular, this gives us   
	$
	\widetilde{L_\Omega^\delta}(H) = \widetilde{L_\Omega^\delta}(Q_1) = 0.   
	$  
\end{proof}  

	\subsection{Measure-preserving homeomorphisms}
	
	Denote by $\mu$ the Liouville measure induced by the volume form $\Omega$. Recall that $\homeo_0(M,\mu)$ 
	denotes the identity component 
	in the group of measure-preserving homeomorphisms $\homeo(M,\mu)$.
	Let $\widetilde {\homeo_0(M,\mu)}$ be the universal 
	covering of $\homeo_0(M,\mu)$. Let $[M,\mathbb{R}/\mathbb{Z}]$ be the space of all homotopy classes of continuous maps from $M$ onto $\mathbb{R}/\mathbb{Z} = \mathbb{T}$. 
	By identifying 
	$\mathbb{T}$ with $\mathbb{S}^1$, the group law on the space $\mathbb{S}^1$ is denoted additively. Due to Fathi \cite{Fathi}, the group $\homeo_0(M,\mu)$ is locally 
	contractible; hence, the quotient space of equivalent classes of homotopic paths isotopic to the identity with fixed endpoints 
	in $\homeo_0(M,\mu)$ 
	coincides 
	with the universal covering of $\homeo_0(M,\mu)$. For $[h] = [(h_t)]\in \widetilde {\homeo_0(M,\mu)}$, and a continuous map $f : M \rightarrow \mathbb{S}^1$, we lift the homotopy $fh_t - f : M \rightarrow \mathbb{S}^1$ to a map  $\overline{fh_t - f}$ from $M$ onto 
	$ \mathbb{R}$. Fathi proved that the integral $\int_M\overline{fh_t - f}d\mu$ depends only on the homotopy class $[h]$ of $(h_t)$ and the homotopy class 
	$\{f\}$ of $f$ in $[M,\mathbb{S}^1]\approx H^1(M,\mathbb{Z})$, and that the map 
	\[
	\widetilde{\mathfrak{F}}((h_t))(f):= \int_M\overline{fh_t - f}d\mu.
	\]
	defines a homomorphism 
	\[
	\widetilde{\mathfrak{F}} :\widetilde {\homeo_0(M,\mu)}\rightarrow \homo( H^1(M,\mathbb{Z}),\mathbb{R})\approx H_1(M,\mathbb{R}).
	\]
	Put
	$\Gamma(\mu) = \widetilde{\mathfrak F}(\pi_1(\homeo_0(M,\mu))),$ where $\pi_1(\homeo_0(M,\mu))$ is the first fundamental group of the topological 
	space $\homeo_0(M,\mu)$. The set $\Gamma(\mu)$ is in fact a discrete subgroup inside 
	$ H_1(M,\mathbb{R})$. The continuous epimorphism $\widetilde{\mathfrak{F}}$ induces a continuous epimorphism 
	$\mathcal{F}$ from $\homeo_0(M,\mu)$ onto $H_1(M,\mathbb{R})/\Gamma(\mu)$. It was shown in \cite{Fathi} that the flux for volume-preserving diffeomorphisms is the Poincaré dual of Fathi's mass flow. In the present paper, we extend this duality result to the $C^0$-closure of volume-preserving homeomorphisms.
	
	\begin{lemma}\label{lem8}  Let $H\in \pcal\homeo_0(M,\Omega)$. Then, the cohomology class 
		$\widetilde{L}_\Omega^\delta(H)$ is the Poincaré dual of the homology class $\widetilde{\theta}(H)$, 
		where $\widetilde{\theta} $ stands for the Fathi's mass flow homomorphism:
		\[
		\widetilde{\theta}(H)(f) = \langle \widetilde{L}_\Omega^\delta(H), f^\ast\sigma\rangle,
		\]
		for all continuous mappings $f$ from $M$ to the circle $\mathbb{S}^1$, where $\sigma$ is the volume form on $\mathbb{S}^1$. 
	\end{lemma}
	\section*{Application to finite energy homeomorphisms}
		The study of symplectic homeomorphisms and their energy properties has emerged as a central theme in symplectic topology and Hamiltonian dynamics.  In finite dimensions, the group of symplectic diffeomorphisms, and more recently, symplectic homeomorphisms, plays a role analogous to Lie groups in other geometric contexts.  Understanding the structure of these groups, particularly in terms of energy-related concepts like Hofer's geometry and flux theory \cite{Ban97}, is crucial for advancing our knowledge of symplectic manifolds and their dynamics.
	
	This paper studies finite energy symplectic homeomorphisms, focusing specifically on those with trivial flux on closed symplectic manifolds.  Our primary goal is to investigate whether, under certain topological conditions, a finite energy symplectic homeomorphism with trivial flux is necessarily a finite energy Hamiltonian homeomorphism \cite{cr2}.  This question is motivated by the desire to understand the relationship between symplectic and Hamiltonian dynamics at the topological level, particularly in the context of energy bounds.
	
	We concentrate our analysis on the 2-torus $(T^2, \omega)$, a fundamental example in symplectic geometry, while also establishing results that hold more generally for closed symplectic manifolds of Lefschetz type.  Our approach relies on a blend of approximation techniques, flux theory, and tools from Hofer-like geometry.  A cornerstone of our method is the use of Hodge decomposition to separate symplectic isotopies into harmonic and Hamiltonian components.  To control the energy  associated with these decompositions, we revisite and apply a boundedness result for two-parameter families of vector fields, leveraging Grönwall's inequality \cite{TKS-2}.
	
	A key contribution of this work is to demonstrate that on the 2-torus. This result sheds light on the rigidity properties of symplectic homeomorphisms with constraints on their flux and energy.  Furthermore, we establish a proposition concerning the loop property of harmonic flows on $T^2$, connecting the flux group to the periodicity of harmonic vector fields.\\

	Through this investigation, we aim to contribute to a deeper understanding of the interplay between symplectic topology, Hamiltonian dynamics, and the topological properties of homeomorphism groups in infinite-dimensional settings. 
		\subsubsection{Basics notions}
	
	Any symplectic isotopy  $\Phi=\{\phi^t\}$ is generated by a pair $(U,\mathcal{H})$, where $U=\{U^t\}$ is a smooth  time dependent function on $\Sigma_{g}\times [0,1]$ and $\mathcal{H}=\{\mathcal{H}^t\}$ is a smooth family of  $S-$forms or  harmonic forms. Therefore, the Hofer-like length of $\Phi=\{\phi^t\}$  is given by 
	
	\begin{equation}
		l^{(1,\infty)}_{\kappa, \mathcal{S}}(\Phi) 
		= \int_{0}^{1}\left ( osc(U^t) + \kappa\| \mathcal{H}^t\| _{L^2}\right )dt,
	\end{equation} 
	where $osc(\cdot)= \max(\cdot)-\min(\cdot)$,  $\kappa$ is a positive real number and $\|\cdot\|_{L^2}$ is the $L^2$-norm on $H^1(M,\mathbb R)$.
	The length $l^{(1,\infty)}_{\kappa, \mathcal{S}}$ induces  metrics $D_{\kappa, \mathcal{S}}^{1}$ and $d_{HL}$, on  $Iso(\Sigma_{g},\omega)$ the group of symplectic isotopies   and $G_{\omega}(\Sigma_{g})$ respectively.
	\begin{definition}
		A homeomorphism $\phi$ is called a finite symplectic  energy homeomorphism if there exists a sequence $(\Phi_{i})_{i}=(\{\phi_{i}^t\})_{i}$ of symplectic isotopies generated by $(U_{i},\mathcal{H}_{i})_{i}$ such that $\phi_{i}^{1}\xrightarrow{C^0}\phi$ and 	$l^{(1,\infty)}_{\kappa, \mathcal{S}}(\Phi_{i}) $ is bounded for all $i$.
	\end{definition}
	Denote  by $FSHomeo(\Sigma_{g})$ the finite energy symplectic homeomorphism group.  If $\Sigma_{g}$ is a  zero genus surface, then $FSHomeo(\Sigma_{g})$ coincides with the finite energy hamiltonian homeomorphism group $FHomeo(\Sigma_{g})$ defined in \cite{cr2}.

	\begin{proposition}(\cite{TKS-2})\label{Pro-1}
		
		Let $(M, g)$ be a closed oriented Riemannian manifold. Let $Z_{s,t}$ be a bounded smooth two-parameter family of vector fields on $M$. Construct a two-parameter family of diffeomorphisms $G_{s,t}$ by integrating $Z_{s,t}$ with respect to $s$, starting from $G_{0,t} = \text{Id}$, i.e.,
		$$ \frac{\partial}{\partial s}G_{s,t}(x) = Z_{s,t}(G_{s,t}(x)), \quad G_{0,t}(x) = x. $$
		Define another two-parameter family of vector fields $V_{s,t}$ as:
		$$ V_{s,t}(G_{s,t}(x)) = \dfrac{\partial}{\partial t}G_{s,t}(x). $$
		Then, $V_{s,t}$ is a bounded smooth two-parameter family of vector fields on $M$.
	\end{proposition}
	
	\begin{proposition}[Loop Property of Harmonic Flow on $T^2$]\label{Pro-22}
		Let $(T^2, \omega = dx \wedge dy)$ be the 2-torus with the standard symplectic form. Let $\Phi = \{\phi_t\}_{t \in [0,1]}$ be a smooth symplectic loop at the identity in $\text{Symp}(T^2, \omega)$, i.e., $\phi_0 = \phi_1 = \text{Id}$. Let $[\alpha] \in H^1(T^2; \mathbb{R})$ be the harmonic representative of the flux of $\Phi$. Let $A$ be the unique harmonic vector field on $T^2$ such that $\iota_A \omega = \alpha$, and let $\Psi = \{\psi_t\}_{t \in [0,1]}$ be the symplectic flow generated by $A$. Then, the symplectic flow $\Psi$ is a loop at the identity (i.e., $\psi_1 = \text{Id}$) if and only if the flux of the symplectic loop $\Phi$ belongs to the flux group $\Gamma_\omega \subset H^1(T^2; \mathbb{R})$.
	\end{proposition}
	
	\begin{proof}
		\textbf{(If part):} Assume the flux of $\Phi$ belongs to $\Gamma_\omega$. We want to show that $\psi_1 = \text{Id}$.
		
		\begin{enumerate}
			\item For the 2-torus $(T^2, \omega = dx \wedge dy)$, the flux group $\Gamma_\omega$ is isomorphic to $H^1(T^2; \mathbb{Z}) \cong \mathbb{Z} \times \mathbb{Z}$. Thus, if the flux of $\Phi$ belongs to $\Gamma_\omega$, its harmonic representative $[\alpha]$ can be represented by a harmonic 1-form $\alpha$ such that its cohomology class $[\alpha]$ has integer periods.
			
			\item The space of harmonic 1-forms on $T^2$ is spanned by $dx$ and $dy$. Therefore, we can write the harmonic representative as $\alpha = a \, dx + b \, dy$ for some $a, b \in \mathbb{R}$. Since $[\alpha] \in \Gamma_\omega$, it must be that the periods of $\alpha$ are integers, which for $T^2$ implies that $a$ and $b$ must be integers, i.e., $a, b \in \mathbb{Z}$.  (Note: with our convention $\omega = dx \wedge dy$, it's more convenient to consider $\alpha = m \, dy - n \, dx$ for $m, n \in \mathbb{Z}$).
			
			\item Let $A$ be the harmonic vector field such that $\iota_A \omega = \alpha = m \, dy - n \, dx$. In coordinates, $A = n \frac{\partial}{\partial x} + m \frac{\partial}{\partial y}$. Since $n, m \in \mathbb{Z}$, $A$ is a constant vector field with integer components.
			
			\item The symplectic flow generated by a constant vector field $A = n \frac{\partial}{\partial x} + m \frac{\partial}{\partial y}$ on $T^2 = \mathbb{R}^2 / \mathbb{Z}^2$ is given by:
			$$ \psi_t(x, y) = (x + nt, y + mt) \pmod{\mathbb{Z}^2}. $$
			
			\item Evaluating at $t = 1$, we get:
			$$ \psi_1(x, y) = (x + n, y + m) \pmod{\mathbb{Z}^2}. $$
			Since $n$ and $m$ are integers, $(x + n, y + m) \pmod{\mathbb{Z}^2} = (x, y) \pmod{\mathbb{Z}^2} = (x, y)$ in $T^2$. Thus, $\psi_1 = \text{Id}_{T^2}$. Therefore, $\Psi$ is a loop at the identity.
		\end{enumerate}
		
		\textbf{(Only if part):} Assume $\Psi$ is a loop at the identity, i.e., $\psi_1 = \text{Id}$. We want to show that the flux of $\Phi$ belongs to $\Gamma_\omega$.
		
		\begin{enumerate}
			\item If $\Psi = \{\psi_t\}$ is a loop at the identity, then $\psi_1 = \text{Id}_{T^2}$. From the flow equation, $\psi_1(x, y) = (x + n, y + m) \pmod{\mathbb{Z}^2} = (x, y)$ for all $(x, y) \in T^2$. This implies that $n$ and $m$ must be integers, i.e., $n, m \in \mathbb{Z}$.
			
			\item The harmonic vector field is $A = n \frac{\partial}{\partial x} + m \frac{\partial}{\partial y}$ with $n, m \in \mathbb{Z}$.  The harmonic 1-form is $\alpha = \iota_A \omega = m \, dy - n \, dx = -n \, dx + m \, dy$.
			
			\item The flux of the path $\Psi$ (which is represented by $[\alpha]$) is then given by the cohomology class of $\alpha = -n \, dx + m \, dy$. Since $n, m \in \mathbb{Z}$, the periods of $\alpha$ are integers, and thus $[\alpha]$ belongs to the flux group $\Gamma_\omega \cong \mathbb{Z} \times \mathbb{Z}$.
			
			\item Since $[\alpha]$ is the harmonic representative of the flux of $\Phi$, and we have shown that $[\alpha]$ is in $\Gamma_\omega$ when $\Psi$ is a loop, it follows that the flux of $\Phi$ belongs to $\Gamma_\omega$.
		\end{enumerate}
	\end{proof}
	\begin{remark}[Generalized Loop Property of Harmonic Flow on Symplectic Surfaces]
		Let $(\Sigma, \omega)$ be a closed symplectic surface (genus $g \ge 0$). Let $\Phi = \{\phi_t\}_{t \in [0,1]}$ be a smooth symplectic loop at the identity in $\text{Symp}(\Sigma, \omega)$, i.e., $\phi_0 = \phi_1 = \text{Id}$. Let $[\alpha] \in H^1(\Sigma; \mathbb{R})$ be the harmonic representative of the flux of $\Phi$, such that $[\alpha] = \widetilde{\text{Flux}}(\Phi)$. Let $A$ be the unique harmonic vector field on $\Sigma$ such that $\iota_A \omega = \alpha$, and let $\Psi = \{\psi_t\}_{t \in [0,T]}$ be the symplectic flow generated by $A$, for any time $T>0$.
		\begin{enumerate}
			\item	The symplectic flow $\Psi$ exhibits a form of quantized behavior on the surface, dictated by $\Gamma_\omega$.  Specifically, the flux of the path $\{\psi_t\}_{t\in [0,1]}$ is $[\alpha] \in \Gamma_\omega$, reflecting that the harmonic flow captures the quantized flux characteristics of the original loop $\Phi$.
			\item	For surfaces of genus $g \ge 1$, where $H^1(\Sigma; \mathbb{R})$ is non-trivial, the condition $\widetilde{\text{Flux}}(\Phi) \in \Gamma_\omega$ implies that the harmonic flow $\Psi$ represents a "topologically quantized motion" on the surface, even if $\psi_1$ is not necessarily the identity diffeomorphism. The extent of deviation of $\psi_T$ from the identity, for times $T$ related to $\Gamma_\omega$, is governed by the discrete nature of the flux group.
			
		\end{enumerate}

	\end{remark}
	\begin{lemma}
		Consider the symplectic manifold $(T^2, \omega)$. Any finite energy symplectic homeomorphism of $(T^2, \omega)$ with trivial flux, is a finite energy Hamiltonian homeomorphism of $(T^2, \omega)$.
	\end{lemma}
	
	\begin{itemize}
		\item  Let $h\in FSHomeo(T^2)$ such that $L_\Omega(h)= 0$. Thus, there exists a volume-preserving homeomorphism $H$ with time-one map $h$ such that $\tilde L_\omega(H)\in \Gamma_\omega$.
		\item Since $h\in FSHomeo(T^2)$, there exists a sequence of symplectic isotopies $\Phi_i := (\phi_i^t)$ such that
		$$ l_{HL}^{(1, \infty)}(\Phi_i) \leq C,$$ for all $i$ and $\phi_i^1\rightarrow h$ in $C^0$ metric. For $i$ sufficiently large, we equally, have $ \widetilde{\text{Flux}}(\Phi_i)\in \Gamma_\omega$. For $i$ sufficiently large, let $L_i$ be a loop at the identity map such that $ \widetilde{\text{Flux}}(\Phi_i) = \widetilde{\text{Flux}}(L_i)$. 
		\item Fix $n_0$ as a sufficiently large integer, and work with $i> n_0$. Let $A_i$ be the harmonic representative in the de Rham  cohomology class of   $\widetilde{\text{Flux}}(L_i)$, let $\Psi_i = (\psi^t_i)$ be the symplectic  flow of generated by $A_i$.
		\item From the proof of Theorem $1-$\cite{TKS-2},  we derive the following:
		Let $(\rho_t^i)$ (respectively, $\{\psi_t^i\}$) be the harmonic part (respectively, the Hamiltonian part) of the Hodge decomposition of the path  $\Phi_i^{-1} \ast_l \Psi_i$ for $i$ sufficiently large. This gives us:
		$$
		\left(\Phi_i^{-1} \ast_l \Psi_i\right) (t) = \rho_t^i \circ \psi_t^i,
		$$
		for $i$ sufficiently large, for all $t$. Since
		$\widetilde{\text{Flux}}((\rho_t^i)) = \widetilde{\text{Flux}}(\Phi_i^{-1} \ast_l \Psi_i) = 0$ for $i> n_0$, then $(\rho_t^i)$ is homotopic to a Hamiltonian isotopy $\Theta_i'$ (relative to fixed endpoints).
		Let $U(\Phi_i, \Psi_i)$ be the generating Hamiltonian of $\Theta_i'$. To define $U(\Phi_i, \Psi_i)$, we use a two-parameter family of symplectic vector fields.  Applying Corollary \(2\) of \cite{TKS-2} with $X_t^i := \dot{\rho}_t^i$ we obtain families  $Z_{s,t}^i$ and $V_{s,t}^i$ defined for each $t$, 	for $i$ sufficiently large.  Then, the Hamiltonian is given by
		$
		U(\Phi_i, \Psi_i)^t := \int_0^1 \omega (Z_{s,t}^i, V_{s,t}^i) ds,
		$
		for each $t$, for $i$ sufficiently large. From Corollary \(2\)-\cite{TKS-2}, we have
		$$
		\int_0^1 \text{osc}\left(\int_0^1 \omega(Z_{s,t}^i, V_{s,t}^i) ds\right) dt \leq \left(6L_0 \sup_{s,t,z} \lVert V_{s,t}^i(z) \rVert_g\right) l_{HL}^{(1, \infty)}(\Phi_i^{-1} \ast_l \Psi_i),
		$$
		for $i$ sufficiently large. Thus
		$$
		l_H^{(1, \infty)}(( \psi_t^i)) +	\int_0^1 \text{osc}\left(\int_0^1 \omega(Z_{s,t}^i, V_{s,t}^i) ds\right) dt \leq \left( 1 + 6L_0 \sup_{s,t,z} \lVert V_{s,t}^i(z) \rVert_g\right) l_{HL}^{(1, \infty)}(\Phi_i^{-1} \ast_l \Psi_i),
		$$
		$$ \leqslant \left( C +  l_{HL}^{(1, \infty)}(\Psi_i)\right)  \left( 1 + 6L_0 \sup_{s,t,z} \lVert V_{s,t}^i(z) \rVert_g\right)$$
		for $i$ sufficiently large. Thus, by Proposition \ref{Pro-1}-\cite{TKS-2}, we get
		
		$$ l_{H}^{(1, \infty)}(\Theta'_i\circ (\psi_t^i)) \leqslant   \left( C +  l_{HL}^{(1, \infty)}(\Psi_i)\right) \left( 1 + 6L_0\frac{N}{K}(e^{K} - 1)\right),$$ for all $i> n_0$.
		\item  By Proposition \ref{Pro-22} the time-one map sequence of $ \Theta'_i\circ (\psi_t^i)$ uniformly converges to $h^{-1}$,  and since we also have $ l_{HL}^{(1, \infty)}(\Psi_i)< C$, then 
		$$  l_{H}^{(1, \infty)}(\Theta'_i\circ (\psi_t^i)) \leqslant    2C \left( 1 + 6L_0(e^{3C} - 1)\right),$$ for all $i$ sfficiently large, then $h^{-1}\in FHomeo(T^2)$ ,i.e., $h\in FHomeo(T^2)$. Here, $N = 3C = K$ (\cite{TKS-2}).  
	\end{itemize}
	\subsection*{\bf A norm on the group $\homeo_0(M, \Omega)$.}\label{Fixpoints-metric}
	For each $h\in \homeo_0(M, \Omega)$, set 
	\begin{equation}\label{Fixpoints-metric1}
		\rVert h\lVert^{\infty, 0}:= 
		\sup_{\alpha\in \mathcal{B}(1)}
		\left( \sup_{z\in M}\rvert\tilde \chi(h,\alpha)_z\lvert\right), 
	\end{equation}
	where $\tilde \chi(h,\alpha)_z :=   \| \alpha \|_{L^2} \chi(h,\alpha)(z) $ and $ \mathcal{B}(1) : = \{  \alpha \in  \zcal^1(M) : \| \alpha \|_{L^2} = 1\}.$

	\begin{proposition}\label{Fixpoints-metric2}
		The rule $ \rVert.\lVert^{\infty, 0}: \homeo_0(M, \Omega)\rightarrow \overline{\mathbb R}$ has the following 
		properties:
		\begin{enumerate}
			\item Positivity: $\rVert h\lVert^{\infty, 0}\geq 0$, for all $h\in  \homeo_0(M, \Omega)$.
			\item Triangle inequality: 
			$\rVert h\circ k\lVert^{\infty, 0}\leq \rVert h\lVert^{\infty, 0} + 
			\rVert k\lVert^{\infty, 0},$ for all $h,k\in \homeo_0(M, \Omega)$.
			\item Duality: $\rVert h^{-1}\lVert^{\infty, 0} = \rVert h\lVert^{\infty, 0}$, for all $h\in  \homeo_0(M, \Omega)$. 
			\item If $\rVert h\lVert^{\infty, 0}= 0$, then  $h = id_M$.
		\end{enumerate}
	\end{proposition}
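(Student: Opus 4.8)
The plan is to reduce all four properties to a single cocycle identity for the homogenized functional $\tilde\varDelta$, recalling that on $\mathcal B(1)$ we have $\|\alpha\|_{L^2}=1$, so $\tilde\varDelta(\psi,\alpha)_z=\varDelta(\psi,\alpha)_z=\int_M\nu^{\psi,\alpha}_z\,\Omega$. Positivity is then immediate, since $\|\psi\|^\infty$ is a supremum of the nonnegative quantities $|\tilde\varDelta(\psi,\alpha)_z|$.

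The crux is to establish, for $\psi,\phi\in Diff^\Omega_0(M)$ and any $\alpha\in\mathcal Z^1(M)$, the identity
$$\tilde\varDelta(\psi\circ\phi,\alpha)_p=\tilde\varDelta(\psi,\alpha)_{\phi(p)}+\tilde\varDelta(\phi,\alpha)_p.$$
First I would record that $\psi^*\alpha-\alpha$ is exact for every $\psi\in Diff_0(M)$ and closed $\alpha$: writing $\psi$ as the time-one map of an isotopy $\psi_t$ with vector fields $X_t$, one has $\frac{d}{dt}\psi_t^*\alpha=\psi_t^*\,d(\iota_{X_t}\alpha)$, so $\psi^*\alpha-\alpha=d\big(\int_0^1\psi_t^*(\iota_{X_t}\alpha)\,dt\big)$, which is exactly what makes $\nu^{\psi,\alpha}_p$ path-independent. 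Then I would decompose
$$(\psi\circ\phi)^*\alpha-\alpha=\phi^*(\psi^*\alpha-\alpha)+(\phi^*\alpha-\alpha)$$
and integrate along a geodesic $\xi_z$ from $p$ to $z$. The second summand yields $\nu^{\phi,\alpha}_p(z)$; for the first, $\int_{\xi_z}\phi^*(\psi^*\alpha-\alpha)=\int_{\phi\circ\xi_z}(\psi^*\alpha-\alpha)$, and since $\phi\circ\xi_z$ runs from $\phi(p)$ to $\phi(z)$ while $\psi^*\alpha-\alpha$ is exact, path-independence identifies this with $\nu^{\psi,\alpha}_{\phi(p)}(\phi(z))$. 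Integrating the resulting pointwise identity against $\Omega$ and using $\phi^*\Omega=\Omega$, hence $\int_M(F\circ\phi)\,\Omega=\int_M F\,\Omega$, converts the first term into $\tilde\varDelta(\psi,\alpha)_{\phi(p)}$, giving the cocycle identity.

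The remaining properties follow formally. For the triangle inequality, take absolute values in the cocycle identity, then sup over $p$ (noting $p\mapsto\phi(p)$ is a bijection of $M$, so $\sup_p|\tilde\varDelta(\psi,\alpha)_{\phi(p)}|=\sup_q|\tilde\varDelta(\psi,\alpha)_q|$), and finally sup over $\alpha\in\mathcal B(1)$. For duality, set $\phi=\psi^{-1}$; since $\nu^{id,\alpha}_p\equiv 0$ forces $\tilde\varDelta(id,\alpha)_p=0$, the cocycle identity gives $\tilde\varDelta(\psi^{-1},\alpha)_p=-\tilde\varDelta(\psi,\alpha)_{\psi^{-1}(p)}$, and taking $|\cdot|$ with the same two suprema yields $\|\psi^{-1}\|^\infty=\|\psi\|^\infty$. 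For non-degeneracy, suppose $\|\psi\|^\infty=0$; by homogeneity of $\tilde\varDelta$ in $\alpha$ this forces $\tilde\varDelta(\psi,\alpha)_x=0$ for every closed $\alpha$ and every $x$. Testing on exact forms $\alpha=df$, a direct computation using $\int_M(f\circ\psi-f)\,\Omega=0$ (again volume-preservation) gives $\tilde\varDelta(\psi,df)_x=-Vol_\Omega(M)\,(f(\psi(x))-f(x))$; one may alternatively read this off Corollary \ref{Cor-1} since $[df]=0$. Hence $f(\psi(x))=f(x)$ for all $x\in M$ and all $f\in C^\infty(M)$, and as smooth functions separate points, $\psi=id_M$.

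The main obstacle is the cocycle identity, and within it the single delicate point is the change of variables $\int_M(\nu^{\psi,\alpha}_{\phi(p)}\circ\phi)\,\Omega=\int_M\nu^{\psi,\alpha}_{\phi(p)}\,\Omega$, which is precisely where the volume-preservation hypothesis $\phi\in Diff^\Omega_0(M)$ is used; everything else reduces to bookkeeping with exactness and path-independence.
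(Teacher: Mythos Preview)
Your argument is correct. The cocycle identity $\tilde\varDelta(\psi\circ\phi,\alpha)_p=\tilde\varDelta(\psi,\alpha)_{\phi(p)}+\tilde\varDelta(\phi,\alpha)_p$ is derived cleanly from the decomposition $(\psi\circ\phi)^\ast\alpha-\alpha=\phi^\ast(\psi^\ast\alpha-\alpha)+(\phi^\ast\alpha-\alpha)$, exactness of $\psi^\ast\alpha-\alpha$, and the change of variables $\int_M(F\circ\phi)\,\Omega=\int_M F\,\Omega$; the triangle inequality and duality then follow by the sup manipulations you indicate. The non-degeneracy step via exact test forms is also correct: your computation $\tilde\varDelta(\psi,df)_x=-Vol_\Omega(M)\,(f(\psi(x))-f(x))$ is precisely the specialization of Corollary~\ref{Cor-1} to $[\alpha]=0$, and smooth functions separate points on $M$.

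As for comparison: the present paper does not actually give a proof of this proposition; it is quoted from \cite{HLGF}, so there is no in-paper argument to compare against. Your self-contained derivation via the cocycle identity is a natural and efficient route, and in fact the same identity is what underlies Corollary~\ref{Cor-1} (which in \cite{HLGF} is obtained by integrating $\alpha$ along orbits and using the flux pairing). One minor presentational point: when you pass from vanishing on $\mathcal B(1)$ to vanishing on all of $\mathcal Z^1(M)$ you invoke homogeneity; it may be worth saying explicitly that $\alpha\mapsto\tilde\varDelta(\psi,\alpha)_x=\int_M\nu^{\psi,\alpha}_x\,\Omega$ is \emph{linear} in $\alpha$, which is slightly stronger and equally immediate from the definition.
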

	\begin{proof}
		We shall just prove the nondegeneracy. This is a verbatim repetition of the proof given in  \cite{HLGF}.  If $\rVert h\lVert^{\infty, 0}= 0$, then $ \chi(h,\alpha)(z) = 0$, for all $z\in M$, and 
		for all $ \alpha \in  \zcal^1(M)$. With the above vanishing condition, it follows 
		from Corollary \ref{lem1} that 
		\[
		\langle [\alpha], \widetilde{L}_\Omega^\delta(H) \rangle =
		Vol_\Omega(M)\int_{\ocal_{z}^H}\alpha,
		\]
		for all $x\in M$, for all $ \alpha \in  \zcal^1(M)$, and  
		for each $H := \{h_t\}\in \pcal\homeo_0(M, \Omega)$ with $h_1 = h$. 
		Assume that 
		there exists $z_0\in M$ such that $h(z_0) \neq z_0$. Then, pick an open neighborhood $U_{z_0}$ of $z_0$ that does not contain $h(z_0)$, and consider 
		a bump function $\rho$ supported in $U_{z_0}$ such that $\rho(z_0) \neq 0$. In particular, for $\alpha = d\rho$, we derive from the above arguments that  
		\[
		0 = \langle [\alpha], \widetilde{L}_\Omega^\delta(H) \rangle = \int_{\ocal_{z_0}^H}\alpha = \rho(h(z_0)) - \rho(z_0),
		\]
		for each $H := \{h_t\}\in \pcal\homeo_0(M, \Omega)$ with $h_1 = h$. That is, $ \rho(h(z_0)) = \rho(z_0) \neq 0 $. This is a contradiction because the open neighborhood $U_{z_0}$ of $z_0$  does not contain $h(z_0)$, and 
		the bump function $\rho$ is supported in $U_{z_0}$. 
	\end{proof}
	This norm measures the maximal deviation of $h$ from the identity in terms of its action on closed $1-$forms evaluated at points in $M$. This is crucial in understanding the structure of $ \homeo_0(M, \Omega)$, providing a way to measure "how far" a homeomorphism is from being the identity map. It could be used to quantify the "size" of perturbations in conservative systems. 

\begin{example}\label{EX3}
	{\bf Calculating the norm of a rotation map:}
	We aim to calculate the norm $\rVert R_\mu^1\lVert^{\infty, 0}$ for the rotation map $R_\mu^1$ on the circle, with the new definition of $\chi$. We have:
	
	\begin{itemize}
		\item $M = S^1$: The unit circle, represented by $[0, 1)$ with endpoints identified.
		\item $R_{\mu}^1: S^1 \rightarrow S^1$: The rotation map defined by $R_{\mu}^1(x) = x + \mu \pmod{1}$, where $\mu \in [0, 1)$.
		\item $\Omega = dx$: The volume form (arc-length) on the circle.
		\item $\mathcal{B}(1) = \{ \alpha \in \mathcal{Z}^1(S^1) : \| \alpha \|_{L^2} = 1 \}$: The set of closed 1-forms on $S^1$ with $L^2$ norm equal to 1.
		\item  $$\chi(R_{\mu}^1, \alpha)(z) = \frac{1}{ \| \alpha \|_{L^2}}\int_{S^1}\left( \int_{\gamma_z^y}\left( (R_\mu^1)^*\alpha - \alpha \right)\right) dy,$$
		where $\gamma_z^y$ is a curve from $z$ to $y$.
	\end{itemize}
	The norm is defined as:
	\begin{equation*}
		\rVert R_\mu^1\lVert^{\infty, 0}:=
		\sup_{\alpha\in \mathcal{B}(1)}
		\left( \sup_{z\in S^1}\rvert\tilde \chi(R_\mu^1,\alpha)_z\lvert\right),
	\end{equation*}
	where $\tilde \chi(R_\mu^1,\alpha)_z :=   \| \alpha \|_{L^2} \chi(R_\mu^1,\alpha)(z) $.  
	A closed $1-$form on $S^1$ can be written as $\alpha = f(x) \, dx$, where $f(x)$ is a periodic function on the circle with period $1$. 
	The pullback of $\alpha$ by $R_{\mu}^1$ is
	$ (R_{\mu}^1)^*\alpha = f(x + \mu) \, dx. $
	The line integral can be calculated as:
	$$ \int_{\gamma_z^y}\left( (R_\mu^1)^*\alpha - \alpha \right)  =  \int_z^y (f(x+\mu) - f(x)) dx.$$
	Note that since our manifold is the circle, there are two possible paths, one that goes clockwise and one that goes counterclockwise, so we must use a consistent method of integration. Also, since the form is closed, the integral will be path independent.
	
	\item \text{Calculation of $\chi(R_{\mu}^1, \alpha)(z)$:}
	$$ \chi(R_{\mu}^1, \alpha)(z) = \frac{1}{ \| \alpha \|_{L^2}} \int_{S^1} \left(\int_z^y (f(x+\mu) - f(x)) dx\right) dy .$$
	
	Then:
	$$\tilde{\chi}(R_\mu^1, \alpha)(z) = \int_{S^1} \left(\int_z^y (f(x+\mu) - f(x)) dx\right) dy$$
	since $\| \alpha \|_{L^2} = 1$ for $\alpha \in \mathcal{B}(1)$. 
	By the Cauchy-Schwarz inequality:  
	
	\[  
	\int_0^{2\pi} \left| f(\theta + \mu) - f(\theta) \right| \, d\theta \leq \left( \int_0^{2\pi} \left| f(\theta + \mu) - f(\theta) \right|^2 \, d\theta \right)^{1/2}.  
	\]  
	
	Since \(f\) is periodic with period \(2\pi\), we have:  
	
	\[  
	\int_0^{2\pi} \left| f(\theta + \mu) - f(\theta) \right|^2 \, d\theta = 2(1 - \text{Re}(f^* (\mu))),  
	\]  
	
	where \(f^* (\mu)\) is the Fourier coefficient of \(f\) at frequency \(\mu\). Using $\| \alpha \|_{L^2} = 1$ for $\alpha \in \mathcal{B}(1)$, which is equivalent to say that $\int_0^{2\pi} |f(\theta)|^2d\theta = 1$ , for all $f\in  C^\infty(S^1)$ since $\alpha = f(\theta)d\theta$, we obtain:  
	
	\[  
	\left| \int_z^y (f(\theta + \mu) - f(\theta)) \, d\theta \right| \leq \sqrt{2}.  
	\]

	Integrating over \(y \in S^1\), we get:  
	
	\[  
	\left| \tilde{\chi}(R_\mu^1, \alpha)(z) \right| \leq \int_{S^1}  \sqrt{2}\, dy = 2\pi\sqrt{2}.  
	\]  
	
	The norm \(\|R_\mu^1\|_{\infty, 0}\) is therefore bounded by:  
	$  
	\|R_\mu^1\|_{\infty, 0} \leq 2\pi\sqrt{2}.  
	$ 
	 
\end{example}
\begin{remark}
	Consider the 
	flat $2-$torus can be represented as
	\[  
	T^2 = S^1 \times S^1 = \{(x, y) \mid x, y \in [0, 1)\},  
	\]  
	with periodic boundary conditions, meaning that \(x=0\) and \(x=1\) (and similarly for \(y\)) are identified. For the rotation,  
	$
	R_{\mu, \nu}(x,y) = (x + \mu \mod 1, y + \nu \mod 1), \quad \mu, \nu \in [0, 1), 
	$
	the case of the unite circle seem to indicate that 
	$\rVert 	R_{\mu, \nu}\lVert^{\infty, 0}\leq 4\pi \sqrt{2}.$\\
	
	\begin{center}
		
		\begin{figure}[ht] 
			\centering
			
			\begin{tikzpicture}[scale=0.9]
				
				\draw[thick, dashed] (0,0) circle (2cm);
				\fill[gray!20] (0,0) circle (2cm);
				\node at (0,2.3) {$\mathcal{B}(1)$};
				
				\draw[->] (3,-2.5) -- (7,-2.5) node[right] {$z$};
				\draw[->] (3,-2.5) -- (3,1) node[above] {$\chi(h,\alpha)(z)$};
				\draw[thick, blue, domain=3:7, samples = 100] plot (\x, {0.2 * sin(deg(3*(\x-3)))});
				\node at (6,-1) {$\chi(h,\alpha_1)(z)$};
				\draw[thick,red, domain=3:7, samples = 100] plot (\x, {0.3 * cos(deg(3*(\x-3)))});
				\node at (6,-1.5) {$\chi(h,\alpha_2)(z)$};
				\draw[thick, green, domain=3:7, samples = 100] plot (\x, {0.1 +0.2 * sin(deg(5*(\x-3)))});
				\node at (6,-0.5) {$\chi(h,\alpha_3)(z)$};
				\draw [thick, dashed] (3, -0.9) -- (7, -0.9);
				
				\draw[thick] (0,0) -- (1.5,1.5) node[above left] {$\alpha_1$};
				\draw[thick] (0,0) -- (1.5,-1.2) node[below left] {$\alpha_2$};
				\draw[thick] (0,0) -- (-1.4,1.3) node[above right] {$\alpha_3$};
				
				\node[align=left, below] at (0,-3.5) {$\rVert h\lVert^{\infty, 0}:= 
					\sup_{\alpha\in \mathcal{B}(1)}
					\left( \sup_{z\in M}\rvert\tilde \chi(h,\alpha)_z\lvert\right)$};
			\end{tikzpicture}
			\caption{A sphere that represents the set $\mathcal{B}(1)$, and several graphs showing the values of the function $\chi(h,\alpha)(z)$ for different values of $\alpha$ in $\mathcal{B}(1)$.}
			\label{fig:Norm} 
		\end{figure}
	\end{center}
	$\square$
\end{remark}
	\begin{proposition}\label{Pro-2}  
	Let \(\phi\) be a smooth map with trivial flux (fixed). Then, the norms \( h \mapsto \|\phi \circ h \circ \phi^{-1}\|^{\infty, 0} \) and \( h \mapsto \|h\|^{\infty, 0} \) are equivalent.   
\end{proposition}  

\begin{proof}  
	It is clear that   
	\[  
	\|\phi \circ h \circ \phi^{-1}\|^{\infty, 0} \leq \sup_{\alpha \in \mathcal{B}(1)} \|\phi^\ast \alpha\|_{L^2} \|h\|^{\infty, 0},  
	\]  
	and applying Proposition 2.3 from \cite{TKS25}, we obtain   
	\[  
	\|\phi \circ h \circ \phi^{-1}\|^{\infty, 0} \leq C_\phi \|h\|^{\infty, 0}.  
	\]  
	Using the identity   
	\[  
	h = \phi^{-1}\left(\phi \circ h \circ \phi^{-1} \right)\phi,  
	\]  
	we derive that   
	\[  
	\|h\|^{\infty, 0} = \|\phi^{-1}\left(\phi \circ h \circ \phi^{-1} \right)\phi\|^{\infty, 0} \leq C_{\phi^{-1}}^2 \|\phi \circ h \circ \phi^{-1}\|^{\infty, 0}.  
	\]  
	Thus, we have the inequalities:  
	\[  
	\frac{1}{C_{\phi^{-1}}} \|h\|^{\infty, 0} \leq \|\phi \circ h \circ \phi^{-1}\|^{\infty, 0} \leq C_\phi \|h\|^{\infty, 0}.  
	\]  
\end{proof}
\section{Cohomology groups of $Homeo_0(M,\Omega)$ with coefficients in $\mathcal{C}(M,\mathbb{R})$} \label{sec4}
In this section, we define the cohomology groups of $Homeo_0(M,\Omega)$. Explicit computations of lower dimensions are given, and the algorithm is explained.

\begin{proposition}\label{pro5}
	For each fixed $x \in M$, and for each $\alpha \in \mathcal{Z}^1(M) \setminus \{0\}$, consider the mapping
	\[
	\tau(\cdot, \alpha)_x : Homeo_0(M,\Omega) \longrightarrow \mathbb{R}, \quad h \longmapsto \chi(H, \alpha)(x).
	\]
 Then, for all $h^1, \dots, h^k \in Homeo_0(M,\Omega)$,
	\[
	\tau(h^1 \circ \cdots \circ h^k, \alpha)_x = \tau(h^k, \alpha)_x + \sum_{1 \leqslant i \leqslant k-1} \tau(h^i, \alpha)_{(h^{i+1} \circ \cdots \circ h^k)(x)}.
	\]
\end{proposition}

\begin{proof}
	We proceed by induction. For $k = 2$, this is exactly the result of Corollary \ref{cor2}. Let $k > 2$. Assume that
	\[
	\tau(h^1 \circ \dots \circ h^{k-1}, \alpha)_x = \tau(h^{k-1}, \alpha)_x + \sum_{1 \leqslant i \leqslant k-2} \tau(h^i, \alpha)_{(h^{i+1} \circ \cdots \circ h^{k-1})(x)}.
	\]
	Set $h := h^1 \circ \dots \circ h^{k-1}$ and compute
	\[
	\tau(h \circ h^{k}, \alpha)_x = \tau(h^k, \alpha)_x + \tau(h, \alpha)_{h^k(x)}.
	\]
\end{proof}

	\begin{lemma}For each $x\in M$, and $\alpha\in \mathcal{Z}^1$, 
		the map
		\[
	\tau_\alpha:	\mathbb{G}^\Omega(M) \to \mathcal{C}(M, \mathbb{R}), \quad h \mapsto \tau(h, \alpha)_x
		\]
		is continuous with respect to the $C^0$ topology on $\mathbb{G}^\Omega(M)$ and the $C^0$ topology on $\mathcal{C}(M, \mathbb{R})$.
	\end{lemma}
	
	\begin{proof}
		We need to show that if $h_n \to h$ in $\mathbb{G}^\Omega(M)$ (in the $C^0$ sense), then $\chi(h_n, \alpha) \to \chi(h, \alpha)$ in $\mathcal{C}(M, \mathbb{R})$ (in the $C^0$ sense). This means showing that for any $\epsilon > 0$, there exists $N \in \mathbb{N}$ such that for all $n > N$,
		$
		\sup_{z \in M} |\chi(h_n, \alpha)(z) - \chi(h, \alpha)(z)| < \epsilon.
		$\\
		
		 \textbf{Start with the Difference:} Fix $z \in M$. We want to estimate $|\chi(h_n, \alpha)(z) - \chi(h, \alpha)(z)|$. Using the definition, we get
		\[
		|\chi(h_n, \alpha)(z) - \chi(h, \alpha)(z)| = \frac{1}{\|\alpha\|_{L^2}} \left| \int_M I_\alpha(h_n, z, y) \Omega(y) - \int_M I_\alpha(h, z, y) \Omega(y) \right|
		\]
		\[
		= \frac{1}{\|\alpha\|_{L^2}} \left| \int_M (I_\alpha(h_n, z, y) - I_\alpha(h, z, y)) \Omega(y) \right|.
		\]
		
		 \textbf{Bounding the integrand:} Consider the term inside the integral, $I_\alpha(h_n, z, y) - I_\alpha(h, z, y)$. Using the definition of $I_\alpha$, we have:
		\[
		I_\alpha(h_n, z, y) - I_\alpha(h, z, y) = \left(\int_{h_n \circ \gamma} \alpha - \int_{\gamma} \alpha \right) - \left(\int_{h \circ \gamma} \alpha - \int_{\gamma} \alpha \right) = \int_{h_n \circ \gamma} \alpha - \int_{h \circ \gamma} \alpha,
		\]
		where $\gamma$ is a smooth curve from $z$ to $y$.  Since $h_n$ and $h$ are homeomorphisms, they are, thus continuous. Since $h_n(y) \rightarrow h(y)$ as $n\rightarrow \infty$, it follows that $h_n \circ \gamma$ converges uniformly to $h \circ \gamma$ as $n\rightarrow \infty$. For $n$ sufficiently large, for each $t$, the points $h\left( \gamma(t)\right) $ and $h_n\left( \gamma(t)\right) $ can be connected by a unique minimal geodesic $\kappa_n^{t}$.  Then let $\mathcal{S}$ the  $2-$chain delimited by $ h_n \circ \gamma \Cup h \circ \gamma\Cup\kappa_n^{1} \Cup\kappa_n^{0} $.  Since $\alpha $ is closed, by Stokes theorem
		$$\left|\int_{h_n \circ \gamma} \alpha - \int_{h \circ \gamma} \alpha \right| = | \int_{\kappa_n^{0}} \alpha - \int_{\kappa_n^{1}} \alpha|,$$
		for $n$ sufficiently large. 
	Then, we have 
		\[
		|\chi(h_n, \alpha)(z) - \chi(h, \alpha)(z)| = \frac{1}{\|\alpha\|_{L^2}} \left| \int_M (I_\alpha(h_n, z, y) - I_\alpha(h, z, y)) \Omega(y) \right|\leqslant  2\frac{|\alpha|_0}{\|\alpha\|_{L^2}}d_{C^0}(h_n, h),
		\]
			for $n$ sufficiently large. We have shown that the map $h \mapsto \chi(h, \alpha)$ is continuous with respect to the $C^0$ topology.
	\end{proof}

One defines a $Homeo_0(M,\Omega)$-module $\mathfrak{L}$ to be an Abelian group, written additively, on which $Homeo_0(M,\Omega)$ acts as endomorphisms. The right action of $Homeo_0(M,\Omega)$ on $\mathcal{C}(M, \mathbb{R})$ is defined as
\[
\bullet : Homeo_0(M,\Omega) \times \mathcal{C}(M, \mathbb{R}) \longrightarrow \mathcal{C}(M, \mathbb{R}), \quad (h, f) \longmapsto \bullet(h, f) = h \cdot f := f \circ h.
\]
Therefore, $\mathcal{C}(M, \mathbb{R})$ is a $Homeo_0(M,\Omega)$-module. The set $C^l(M, \Omega)$ of all functions
\[
F : \overbrace{Homeo_0(M,\Omega) \times \cdots \times Homeo_0(M,\Omega)}^{l \text{ factors}} \longrightarrow \mathcal{C}(M, \mathbb{R})
\]
is an abelian group with the usual addition and the $0$ element defined as: $0(\phi_1, \dots, \phi_l) = 0$. Furthermore, we define the following mapping
\[
\partial^l : C^l(M, \Omega) \longrightarrow C^{l + 1}(M, \Omega),
\]
where for each $F \in C^l(M, \Omega)$ and all $h_1, \dots, h_{l+1} \in Homeo_0(M,\Omega)$, set
\[
\partial^l(F)(h_1, \dots, h_{l+1}) = F(h_1, \dots, h_l) \circ h_{l+1} + \sum_{i=1}^l (-1)^i F(h_1, \dots, h_{i-1}, h_i \circ h_{i+1}, \dots) + (-1)^{l+1} F(h_2, \dots, h_{l+1}).
\]
For instance, for $l = 0, 1, 2, 3$, we have:
\[
(\partial^0 f)(h) = f \circ h - f,
\]
\[
(\partial^1 f)(h_1, h_2) = f(h_1) \circ h_2 - f(h_1 \circ h_2) + f(h_2),
\]
\[
(\partial^2 f)(h_1, h_2, h_3) = f(h_1, h_2) \circ h_3 - f(h_1 \circ h_2, h_3) + f(h_1, h_2 \circ h_3) - f(h_2, h_3),
\]
$$
(\partial^3 f)(h_1, h_2, h_3, h_4) = f(h_1, h_2, h_3) \circ h_4 - f(h_1 \circ h_2, h_3, h_4) + f(h_1, h_2 \circ h_3, h_4) - f(h_1, h_2, h_3 \circ h_4) $$
$$+ f(h_2, h_3, h_4).
$$

One can easily prove that $\partial^{l + 1} \circ \partial^l = 0$ for all non-negative integers $l$.

Thus, we have the following complex:
\[
\mathcal{C}(M, \mathbb{R}) \xrightarrow{\partial^0} C^1(M, \Omega) \xrightarrow{\partial^1} \cdots \xrightarrow{\partial^{l-1}} C^l(M, \Omega) \xrightarrow{\partial^l} C^{l+1}(M, \Omega) \xrightarrow{\partial^{l+1}} \cdots
\]
Therefore, the $l$-th cohomology group of $Homeo_0(M,\Omega)$ with coefficients in $\mathcal{C}(M, \mathbb{R})$ is defined as the factor group
\[
H^l(Homeo_0(M,\Omega), \mathcal{C}(M, \mathbb{R})) := \ker \partial^l / \text{im}\, \partial^{l-1},
\]
where the set $Z^l(Homeo_0(M,\Omega), \mathcal{C}(M, \mathbb{R})) := \ker \partial^l$ consists of $l$-cocycles, while the set $B^l(Homeo_0(M,\Omega), \mathcal{C}(M, \mathbb{R})) := \text{im}\, \partial^{l-1}$ is that of $l$-coboundaries.

\subsection*{The zeroth cohomology group:}
$$
H^0(Homeo_0(M,\Omega), \mathcal{C}(M, \mathbb{R})) := Z^0(Homeo_0(M,\Omega), \mathcal{C}(M, \mathbb{R}))
$$ 
$$ = \left\{f \in \mathcal{C}(M, \mathbb{R}) : f \circ h = f, \ \forall\, h \in Homeo_0(M,\Omega)\right\}.
$$
Hence, $H^0(Homeo_0(M,\Omega), \mathcal{C}(M, \mathbb{R}))$ is the module of invariants, and since the identity component in the group of volume-preserving diffeomorphisms is contained in $Homeo_0(M,\Omega)$, it follows as a consequence of Boothby's transitivity result that
\[
H^0(Homeo_0(M,\Omega), \mathcal{C}(M, \mathbb{R})) \cong \mathbb{R}.
\]

\subsection*{The first cohomology group:}
\begin{proposition}\label{pro12}
	For each closed $1$-form $\alpha$, the map
	\[
	\tau_\alpha : Homeo_0(M,\Omega) \longrightarrow \mathcal{C}(M, \mathbb{R}), \quad h \longmapsto \tau(h, \alpha),
	\]
	induces a well-defined element $[\tau_\alpha] \in H^1(Homeo_0(M,\Omega), \mathcal{C}(M, \mathbb{R}))$, and an injective group homomorphism
	\[
	\iota_0 : H^1(M, \mathbb{R}) \longrightarrow H^1(Homeo_0(M,\Omega), \mathcal{C}(M, \mathbb{R})), \quad [\alpha] \longmapsto [\tau_\alpha].
	\]
\end{proposition}

\begin{proof}
	\emph{Well-definedness:} For each closed $1$-form $\alpha$, it follows from Corollary \ref{cor2} that $\tau(\cdot, \alpha)$ is a $1$-cocycle, i.e., $\tau(\cdot, \alpha) \in \ker \partial^1$. Thus, $[\tau_\alpha] := [\tau(\cdot, \alpha)] \in H^1(Homeo_0(M,\Omega), \mathcal{C}(M, \mathbb{R}))$. Let $\beta \in [\alpha]$, namely, there exists $f \in \mathcal{C}^\infty(M, \mathbb{R})$ such that $\alpha - \beta = df$. It is a straightforward computation that for each $x \in M$ and each $h \in Homeo_0(M,\Omega)$,
	\[
	\tau(h, \alpha)(h)(x) - \tau(h, \beta)(h)(x) = \tau_{df}(h)(x) = -\left(f(h(x)) - f(x)\right) = -\left(\partial^0(f)(h)\right)(x),
	\]
	i.e., $\tau_\alpha - \tau_\beta = \partial^0(-f) \in \text{im}\, \partial^0$. Thus, $[\tau_\alpha] = [\tau_\beta]$ whenever $\beta \in [\alpha]$. This shows that the linear mapping $\iota_0$ is well-defined. Let $[\alpha]$ and $[\beta]$ be two de Rham cohomology classes such that $[\tau_\alpha] = [\tau_\beta]$. From the assumption, one derives that for all $H \in \mathcal{P}Homeo_0(M,\Omega)$,
	\[
	\tau_\alpha(H(1)) - \tau_\beta(H(1)) = \partial^0(F)(H(1)),
	\]
	that is, $\tau_\alpha(H(1))(z) - \tau_\beta(H(1))(z) = (F \circ H(1) - F)(z)$ for all $z \in M$. Integrating the latter equality on both sides gives:
	\[
	\langle [\alpha], \widetilde{L}_\Omega(H) \rangle - \langle [\beta], \widetilde{L}_\Omega(H) \rangle = \int_M \mathcal{R}(H, \alpha) \Omega - \int_M \mathcal{R}(H, \beta) \Omega = \int_M \left(F \circ H(1) - F\right) \Omega = 0.
	\]
	That is, $\langle [\alpha] - [\beta], \widetilde{L}_\Omega(H) \rangle = 0$ for all $H \in \mathcal{P}Homeo_0(M,\Omega)$. Since the map $\widetilde{L}_\Omega$ is surjective, we have proved that $\langle [\alpha] - [\beta], [\omega] \rangle = 0$ for all $\omega \in H^{(n-1)}(M, \mathbb{R})$. This implies that $[\alpha] = [\beta]$.
\end{proof}

If we set
\[
\mathcal{C}_0^\infty(M, \mathbb{R}) := \left\{f \in \mathcal{C}^\infty(M, \mathbb{R}) : \int_M f \Omega = 0\right\},
\]
then Ismagilov proved that the two spaces $H^1(M, \mathbb{R})$ and $H^1(G_\Omega(M), \mathcal{C}_0^\infty(M, \mathbb{R}))$ are isomorphic. Furthermore, if we set
\[
\triangle(Homeo_0(M,\Omega), \mathcal{C}(M, \mathbb{R})) = \iota_0(H^1(M, \mathbb{R})),
\]
then with Proposition \ref{pro12}, we have that the subgroup $\triangle(Homeo_0(M,\Omega), \mathcal{C}(M, \mathbb{R}))$ is isomorphic to $H^1(M, \mathbb{R})$. For instance, we have
\[
\dim\left(H^1(Homeo_0(M,\Omega), \mathcal{C}(M, \mathbb{R}))\right) \geqslant b_1(M),
\]
where $b_1(M)$ is the first Betti number of $M$. This injection 
shows that the topology of the manifold itself constrains and influences the possible dynamical behaviors of volume-preserving homeomorphisms. In particular, non-trivial topology (e.g., non-zero Betti numbers) implies that there are non-trivial ways to twist and distort the phase space.\\

\subsection{Final remarks}

	The results of this paper bridge the gap between smooth and topological geometry by focusing on the rich and poorly understood world of volume-preserving homeomorphisms. In the symplectic context, for Lefschetz closed symplectic manifolds, the results of this paper generalize the findings in \cite{Tch-Koi-Bal-Mba} and \cite{Tch-Al}, providing a more powerful tool for understanding their underlying dynamics and stability. On the other hand, the norm studied here 
	provides a tool for assessing how much a transformation distorts differential forms, which are closely related to concepts of flow, circulation, and geometry. This makes such norms relevant for analyzing systems where the topological properties are well-defined but not the smoothness (or differentiability). Such scenarios often arise in fluid dynamics and mechanics. Additionally, this norm can be used to study the robustness of volume-preserving systems under perturbations.


\begin{thebibliography}{10}
		
		\bibitem{Ban78}
		A.Banyaga, {\em Sur la structure de diff\'{e}omorphismes qui pr\'{e}servent
			une forme symplectique}, \emph{Comment. Math. Helv.} \textbf{53} (1978),
		p.174-2227.
		
		\bibitem{Ban97}
		A.Banyaga , \emph{The Structure of classical diffeomorphisms groups}, Mathematics
		and its applications, Kluwer Academic Publisher's Group, Dordrescht, The
		Netherlands, 1997.
		
	
	
		
	
		
		
		
	
	
	
	
		\bibitem{Cal}
		E.Calabi, {\em On the group of automorphisms of a symplectic manifold},
		\emph{Princeton Univ. PressPrinceton, N.J.} (1970), p.1-26.
		
		\bibitem{cr2} D. Cristofaro-Gardiner, V. Humilière, and S. Seyfaddini. PFH spectral invariants on
		the two-sphere and the large scale geometry of Hofer’s metric. Journal of the European Mathematical Society,(2021): n. pag.
		
	
		
		\bibitem{Fathi}
		A.Fathi, {\em Structure of the group of homeomorphisms preserving a good
			measure on a compact manifold }, \emph{Ann. Scient. Ec. Norm. Sup.}
		\textbf{13} (1980), p.45-93.
		
		\bibitem{Fathi80-1}
		A.Fathir, {\em Transformations et homeomorphismes pr\'eservant la mesure},
		\emph{These} (1980).
		
	
	
		\bibitem{Hirs76}
		M. Hirsch, \emph{Differential Topology,} Graduate Texts in Mathematics, no. 33, Springer
		Verlag, New York-Heidelberg, {\bf 3}, $(1976)$ corrected reprint $(1994)$.
		
		
		
	
	
		
		
		
		\bibitem{Mull-1}
		S.M\"{u}ller, {\em Uniform approximation of homeomorphisms by
			diffeomorphisms }, \emph{Topology and its Applications} \textbf{178}
		(2014), p.315-319.
		
	
		
		
		
		
		\bibitem{J-C}
		J.-C. Sikorav, {\em Approximation of a volume-preserving homeomorphism by a
			volume-preserving diffeomorphism}, 2007.
		
		
		
		\bibitem{HLGF}
		S.Tchuiaga , {\em Hofer-like geometry and flux theory}, \emph{JDSGT}
		\textbf{19 (2)} (2021), p.227-270.
		
		\bibitem{TKS25}
		S.Tchuiaga, {\em A survey on volume-preserving rigidity, } 2025. to appear.  
		
			\bibitem{TKS-2}
		S.Tchuiaga , {\em Flux Group Triviality and the Coincidence of Hofer and Hofer-like Norms}, Arxiv (2025).
	
		
			\bibitem{Tch-Al}
		S.Tchuiaga, F.Houenou, C.Madengko, A.Nguedakumana, {\em $C^0-$transport of
			flux geometry }, \emph{Journal of Topology and its Applications}
		\textbf{322} (2022), p.108301.
		
		\bibitem{Tch-Koi-Bal-Mba}
		S.Tchuiaga, M.Koivogui, F.Balibuno, V.~Mbazumutima, {\em On topological
			symplectic dynamical systems}, \emph{CUBO} \textbf{19(2)} (2017),
		p.49-71.
		\bibitem{TH}
		W.Thurston, { \em On the structure of the group of volume-preserving
			diffeomorphisms}, 1973.
		
	\end{thebibliography}
	\end{document}